\newcommand{\be}{\begin{equation}}
\newcommand{\ee}{\end{equation}}
\newtheorem{theorem}{Theorem}[section]
\newtheorem{lemma}{Lemma}[section]
\newtheorem{definition}{Definition}[section]
\newtheorem{remark}{Remark}[section]
\newtheorem{example}{Example}[section]
\begin{document}
\baselineskip 6mm
\renewcommand{\refname}{\begin{flushleft}{{\bf\small
Reference}}\end{flushleft}}
\title {{\bf Bound-preserving discontinuous Galerkin method for compressible miscible displacement in porous media}
 \thanks{\small  Supported by National Natural Science Foundation of China (11571367 and 11601536) and Michigan
Technological University Research Excellence Fund Scholarship and Creativity Grant 1605052}
}
 \vskip 30mm
\author { Hui Guo\thanks{College of Science, China University of Petroleum, Qingdao 266580, China. E-mail: sdugh@163.com}\quad\quad\quad\quad Yang Yang\thanks{Michigan Technological University, Houghton, MI 49931, USA. E-mail:yyang7@mtu.edu}
}
\date{}
\maketitle
\begin{center}
\begin{minipage}{13.4cm}
{\bf \small Abstract:}
{\small
In this paper, we develop bound-preserving discontinuous Galerkin (DG) methods for the coupled system of compressible miscible displacement problems. We consider the problem with two components and the (volumetric) concentration of the $i$th component of the fluid mixture, $c_i$, should be between $0$ and $1$. However, $c_i$ does not satisfy the maximum principle. Therefore, the numerical techniques introduced in (X. Zhang and C.-W. Shu, Journal of Computational Physics, 229 (2010), 3091-3120) cannot be applied directly. The main idea is to apply the positivity-preserving techniques to both $c_1$ and $c_2$, respectively and enforce $c_1+c_2=1$ simultaneously to obtain physically relevant approximations. By doing so, we have to treat the time derivative of the pressure $dp/dt$ as a source in the concentration equation. Moreover, $c_i's$ are not the conservative variables, as a result, the classical bound-preserving limiter in (X. Zhang and C.-W. Shu, Journal of Computational Physics, 229 (2010), 3091-3120) cannot be applied. Therefore, another limiter will be introduced. Numerical experiments will be given to demonstrate the accuracy in $L^\infty$-norm and good performance of the numerical technique.
}\\
{\small \bf Keywords:} {\small Compressible miscible displacement problem; Discontinuous Galerkin method; Bound-preserving
}\\
\end{minipage}
\end{center}
\section{Introduction}
\label{sec1}

Numerical modeling of miscible displacements in porous media is important and interesting in oil recovery and environmental pollution problem. The miscible displacement problem is described by a coupled system of nonlinear partial differential equations. The need for accurate solutions to the coupled equations challenges numerical analysts to design new methods. In this paper, we study the compressible miscible displacements in porous media on computational domain $\Omega=[0,2\pi]\times[0,2\pi]$
\begin{equation}\label{origin}
\begin{array}{ll}
d(c)p_t+\nabla\cdot\mathbf{u}=d(c)p_t-\nabla\cdot(\frac{\kappa(x,y)}{\mu(c)}\nabla p)=q,&(x,y)\in\Omega,\quad0< t\le T,\\
\phi c_t+b(c) p_t+\mathbf{u}\cdot\nabla c-\nabla\cdot({\bf D}\nabla c)=(\tilde{c}-c)q,& (x,y)\in\Omega,\quad0< t\le T,
\end{array}
\end{equation}
as well as its one-dimensional version. In \eqref{origin} the dependent variables $p$, ${\bf u}$ and $c$ are the pressure in the fluid mixture, the Darcy velocity of the mixture (volume flowing across a unit across-section per unit time), and the concentration of interested species measured in amount of species per unit volume of the fluid mixture, respectively. $\phi$ and $\kappa$ are the porosity and the permeability of the rock, respectively. $\mu$ is the concentration-dependent viscosity. $q$ is the external volumetric flow rate, and $\tilde{c}$ is the concentration of the fluid in the external flow. $\tilde{c}$ must be specified at points where injection ($q>0$) takes place, and is assumed to be equal to $c$ at production points $(q<0)$. The diffusion coefficient ${\bf D}$ arises from two aspects: molecular diffusion, which is rather small for field-scale problems, and dispersion, which is velocity-dependent, in the petroleum engineering literature. Its form is
\begin{equation}
{\bf D}=\left(\begin{array}{cc}D_{11}&D_{12}\\D_{21}&D_{22}\end{array}\right)=\phi(x,y)(d_{mol}{\bf I}+d_{long}|{\bf u}|{\bf E}+d_{tran}|{\bf u}|{\bf E}^{\bot}),
\end{equation}
where ${\bf E}$, a $2\times2$ matrix, represents the orthogonal projection along the velocity vector and is given by
$${\bf E}=(e_{ij}({\bf u}))=\left(\frac{u_{i}u_{j}}{|{\bf u}|^{2}}\right),\quad {\bf u}=(u_1, u_2),$$
and ${\bf E}^{\bot}={\bf I}-{\bf E}$ is the orthogonal complement. The diffusion coefficient $d_{long}$ measures the dispersion in the direction of the flow and $d_{tran}$ shows that transverse to the flow. To ensure the stability of the scheme, in almost all of the previous works ${\bf D}$ is assumed to be strictly positive definite. In this paper, we assume ${\bf D}$ to be positive semidefinite. Therefore, we have $D_{11}\geq0$, $D_{22}\geq0$ and $D_{12}=D_{21}$. In the numerical experiments, we also choose ${\bf D}={\bf 0}$ to challenge the algorithm. Moreover, the pressure is uniquely determined up to a constant, thus we assume $\int_{\Omega} pdxdy=0$ at $t=0$. However, this assumption is not essential. In this paper, we consider a two component displacement only. The coefficients can be stated as follows:
$$
\begin{array}{ll}
c=c_{1}=1-c_{2},&a(c)=a(x,y,c)=\frac{\mu(c)}{\kappa},\\
b(c)=b(x,y,c)=\phi c_{1}\{z_1-\sum\limits_{j=1}^{2}z_j c_j\},& d(c)=d(x,y,c)=\phi\sum\limits_{j=1}^{2}z_j c_j,
\end{array}
$$
where $c_i$ and $z_i$ are the concentration and the compressibility factor of the $i$th component of the fluid mixture, respectively. In this problem, the Neumann boundary conditions are given as
\begin{equation}\label{boundary2d}
{\bf u}\cdot{\bf n}=0,\quad({\bf D}\nabla c-c{\bf u})\cdot{\bf n}=0,
\end{equation}
where ${\bf n}$ is the outer normal of the boundary $\partial\Omega$. Moreover, the initial solutions are given as
$$
c(x,y,0)=c_{0}(x,y),\quad p(x,y,0)=p_{0}(x,y),\quad (x,y)\in \Omega.
$$

The miscible displacement problem in porous media was first introduced in \cite{Douglas1,Douglas2}, in which the mixed finite element methods were presented. Later, the compressible problem was studied in \cite{Douglas Jr} and the optimal order estimates in $L^2$ and almost optimal order estimates in $L^\infty$ were given in \cite{Chou1991}. Subsequently, many new methods were introduced to solve the compressible miscible displacements, such as finite difference method \cite{Yuan1999,Yuan2003,Yuan2004}, characteristics collocation method \cite{Ma2005}, splitting positive definite mixed element method \cite{Yang2001} and $H^1$-Galerkin mixed method \cite{Chen2010}. Besides the above, in \cite{Wang2000a}, an accurate and efficient simulator is developed for problems with wells. Later, the authors introduced an Eulerian-Lagrangian localized adjoint method (ELLAM) to solve the transport partial differential equation for concentration, while a mixed finite element method (MFEM) to solve the pressure equation \cite{Wang2000}. Recently, discontinuous Galerkin (DG) methods became more popular in solving compressible miscible displacement problem in porous media \cite{Cui2007,Cui2008,Yang2010,Yang2011,Guo2015,Yang2010b}. Some special numerical techniques were introduced to control jumps of numerical approximations as well as the nonlinearality of the convection term. Moreover, the superconvergence results based on the DG methods were also proved in \cite{Yang2011b,Yang2013}. Besides the above, there were also significant works discussing the discontinuous Galerkin methods for incompressible miscible displacements, see e.g. \cite{Bartels,Guo,Kumar,Riviere,Sun2002,Sun2005,Wheeler} and for general porous media flow, see e.g. \cite{Bastian,Ern2009,Ern2010,Sun2005b} and the references therein. However, to the best knowledge of the authors, no previous works focused on the bound-preserving techniques. In many numerical simulations, the approximations of $c$ can be placed out of the interval $[0,1]$. Especially for problems with large gradients, the value of $d(c)$ might be negative, leading to ill-posedness of the problem, and the numerical approximations will blow up. We will use numerical experiments to demonstrate this point in Section \ref{secexample}. In this paper, we will discuss the bound-preserving technique for compressible problems. However, the idea can be extended to incompressible flows with some minor changes.

The DG method gained even greater popularity for good stability, high order accuracy, and flexibility on h-p adaptivity and on complex geometry. Physically bound-preserving high-order numerical methods for conservation laws have been actively studied in the last few years. Most cases, the DG schemes were coupled with suitable trouble cell indicators to avoid the order deterioration in regions where the solutions are smooth while maintaining physical solutions near shocks, e.g. \cite{Dedner,Krivodonova}. In 2010, the genuinely maximum-principle-satisfying high order DG and finite volume schemes were constructed in \cite{zhang1} by Zhang and Shu. Subsequently, this technique has been successfully extended to compressible Euler equations without or with source terms \cite{zhang2,zhang3}, shallow water equations \cite{Xing2010}, and hyperbolic equations with $\delta$-singularities \cite{yynm}. The basic idea is to take the test function to be $1$ in each cell to obtain an equation of the numerical cell average of the target variable, say $r$, and prove the cell average, $\bar{r}$, is within the desired bounds. Then we can apply the bound-preserving limiter to the numerical approximation and construct a new one
\begin{equation}\label{limiter}
\tilde{r}=\bar{r}+\theta(r-\bar{r}),\quad\theta\in[0,1].
\end{equation}
If the problem has only one lower bound zero, the technique is also called positivity-preserving technique. Thanks to the limiter, the whole algorithm were proved to be $L^1$ stable \cite{yyjcp,Qin2016} for some complicated systems. Moreover, the technique does not rely on the trouble cell detector and the limiter keeps the high-order accuracy in regions with smooth solutions for scalar equations \cite{zhang1}. For convection-diffusion equations, in \cite{yifan}, the authors applied the same idea to construct genuinely second-order maximum-principle-satisfying DG methods on unstructured meshes. Subsequently, the ideas have been extended to semilinear parabolic equations with blow-up solutions \cite{Guo2015a} and chemotaxis models \cite{yyima} by using the LDG methods. Recently, the flux limiter \cite{Xu2014,Xiong2015} and third-order maximum-preserving direct DG method \cite{Chen2016} were also introduced. In this paper, we will extend the ideas in \cite{zhang1,yifan} and construct second-order bound-preserving DG methods. However, there are significant differences from previous techniques. First of all, most of the problems in \cite{zhang1,yifan} satisfy maximum principles while the concentration $c$ in \eqref{origin} does not. Therefore, we will split the whole algorithm into two parts. We first rewrite the system into a conservative form and treat $p_t$ as another source in the equation of concentration to obtain the positivity of $c$ by the positivity-preserving technique \cite{zhang2,zhang3}. Then we choose a consistent flux pair (see Definition \ref{def}) in the concentration and pressure equations to obtain the positivity of $1-c$. More precisely, in our analysis, instead of solving $c$ and $p$, we rewrite \eqref{origin} into a system of $c$ and $c_2=1-c$ and enforce $c+c_2=1$ by choosing a consistent flux pair. Secondly, to apply the positivity-preserving technique, we need to numerically approximate the conservative variable $r=\phi c$ instead of $c$. By doing so, the upper bound of $r$ is not a constant and the limiter \eqref{limiter} may fail to work, since such a $\theta$ may not exist. Therefore, we will introduce another limiter to obtain physically relevant numerical approximations. Before we finish the introduction, we would like to point out the main difference between the proposed scheme and the TVD-like schemes. For TVD schemes, the numerical approximations are only first-order accurate at the local extrema. In this paper, our scheme is second-order accurate in $L^\infty$-norm. In Section \ref{secexample}, we will construct analytical solutions to demonstrate the accuracy. To the best knowledge of the authors, this is the first analytical solution available.

The paper is organized as follows. In Section \ref{secldg2d}, we present the DG scheme. The bound-preserving techniques will be given in Sections \ref{secbb}. Analytical solutions will be constructed and some numerical results will be given to demonstrate the good performance of the bound-preserving DG method in Section \ref{secexample}. We will end in Section \ref{sec5} with concluding remarks and remarks for future works.

\section{The DG scheme}
\label{secldg2d}
\setcounter{equation}{0}

We first explain the notations that will be used in this section. We consider rectangular meshes only and the techniques for triangular meshes will be discussed in the future. Let $0=x_{\frac12}<\cdots<x_{N_x+\frac12}=2\pi$ and $0=y_{\frac12}<\cdots<y_{N_y+\frac12}=2\pi$ be the grid points in $x$ and $y$ directions, respectively. Define $I_i=(x_{i-\frac12},x_{i+\frac12})$ and $J_j=(y_{j-\frac12},y_{j+\frac12})$. Let $K_{ij}=I_i\times J_j$ be a partition of $\Omega$ and denote $\Omega_h=\{K_{ij}\}.$ For simplicity, we use $K$ to denote the cell. We use $\Gamma$ for all the cell interfaces, and $\Gamma_0=\Gamma\setminus\partial\Omega$. For any $e\in\Gamma$, denote $|e|$ to be the length of $e$. Moreover, we define ${\bf n}_e={\bf n}_x=(1,0)$ if $e$ is parallel to the $y$-axis while ${\bf n}_e={\bf n}_y=(0,1)$ if $e$ is parallel to the $x$-axis. Further more, we denote $\partial\Omega_+=\{e\in\partial\Omega: {\bf n}_e={\bf n}\}$, where ${\bf n}$ is the unit outer normal of $\partial\Omega$, and $\partial\Omega_-=\partial\Omega\setminus\partial\Omega_+.$ The mesh sizes in the $x$ and $y$ directions are given as $\Delta x_{i}=x_{i+\frac12}-x_{i-\frac12}$ and $\Delta y_j=y_{j+\frac12}-y_{j-\frac12}$, respectively. For simplicity, we assume uniform meshes and denote $\Delta x=\Delta x_i$ and $\Delta y=\Delta y_j.$ However, this assumption is not essential. Following \cite{yifan}, we use second-order DG scheme.
The finite element space is chosen as
$$
W_h=\{z: z|_{K} \in Q^1(K),\forall K \in \Omega_h\},
$$
where $Q^1(K)$ denotes the space of tensor product of linear polynomials in $K$. Given $v\in W_h$, we denote $v^+_{i-\frac12,j}$, $v^-_{i+\frac12,j}$, $v^+_{i,j-\frac12}$, $v^-_{i,j+\frac12}$ to be the traces of $v$ on the four edges of $K_{ij}$, respectively, and use $[v]=v^+-v^-$ and $\{v\}=\frac12(v^++v^-)$ as the jump and average of $v$ at the cell interfaces, respectively. For simplicity, for any $e\in\partial\Omega_-$, we define $v^-|_e=0$. Similarly, for any $e\in\partial\Omega_+$, we define $v^+|_e=0$.

To construct the DG scheme, we first rewrite the system \eqref{origin} into a conservative form
{\setlength\arraycolsep{2pt}\begin{eqnarray}
&&d(c)p_t+\nabla\cdot\mathbf{u}=q,\label{origin1}\\
&&a(c){\bf u}=-\nabla p,\label{origin2}\\
&&\phi c_t+b(c) p_t+\nabla\cdot(\mathbf{u}c)-\nabla\cdot({\bf D}\nabla c)=\tilde{c}q-\phi cz_1p_t,\label{origin3}
\end{eqnarray}}
where $a(c)=\frac{\mu(c)}{\kappa}$ and $z_1$ is the compressibility factor of first component of the fluid mixture.

Before constructing the bound-preserving DG scheme, we would like to demonstrate the following main points that are quite different from most of the previous works.
\begin{enumerate}
\item Approximate $r=\phi c$ directly instead of $c$. Due to the existence of $\phi$ in the time derivative in \eqref{origin3}, we cannot simply take the test function to be $1$ to extract the cell averages of $c$.
\item Treat $p_t$ as a source in \eqref{origin3}. We will first solve $p_t$ in \eqref{origin1} and then use the positivity-preserving technique introduced in \cite{zhang2,zhang3} to construct positive numerical approximations of $r$.
\item Choose a consistent flux pair for \eqref{origin1} and \eqref{origin3}. In Section \ref{secbb1d}, we will suitably choose the numerical fluxes and prove $\bar{r}<\bar{\phi}$, where $\bar{r}$ and $\bar{\phi}$ are the cell averages of $r$ and $\phi$, respectively.
\item The classical bound-preserving limiters \eqref{limiter} cannot be applied. For example, take $\phi(x)=2-x^2$ on [-1,1] and $\bar{r}=1.1$. We cannot find any $\theta$ such that $0\leq\tilde{r}(x)\leq\phi(x)$ at $x=\pm1.$
\item  Approximate $\phi$ by a piecewise linear approximation. We denote $\Phi\in W_h$ to be an interpolation of $\phi$ such that in each cell $K$, $\Phi=\phi$ at the four corners, and we denote this interpolation operator to be $\mathcal{I}_1$. It is easy to see that $\Phi$ is a globally continuous function on $\Omega$ and define $\Phi_{i+\frac12,j+\frac12}=\phi(x_{i+\frac12},y_{j+\frac12})$. Since $\Phi$ is a second-order approximation of $\phi$, the usage of $\Phi$ will not kill the accuracy.
\item Introduce a new limiter. A new limiter will be given to keep the numerical cell average and modify the numerical approximations such that $0\leq r\leq\Phi$, 
    which further yields $c=\frac{r}\Phi\in[0,1]$.
\end{enumerate}

We also use $p,c,{\bf u}$ as the numerical approximations, then the DG scheme is to find $p,r\in W_h$ and ${\bf u}\in{\bf W}_h=W_h\times W_h$ such that for any $\xi,\zeta\in W_h$ and $\bm{\eta}\in{\bf W}_h$,
{\setlength\arraycolsep{2pt}\begin{eqnarray}
(\tilde{d}(r)p_t,\xi)&=&({\bf u},\nabla\xi)+\sum_{e\in\Gamma_0}\int_e\hat{\bf u}[\xi]\cdot{\bf n}_eds+(q,\xi),\label{sc2d1}\\
(a(c){\bf u},\bm{\eta})&=&(p,\nabla\cdot\bm{\eta})+\sum_{e\in\Gamma}\int_e\hat{p}[\bm{\eta}\cdot{\bf n}_e]ds,\label{sc2d2}\\
(r_t,\zeta)&=&\left({\bf u}c-{\bf D}({\bf u})\nabla c,\nabla \zeta\right)+(\tilde{c}q-rz_1p_t,\zeta)+\sum_{e\in\Gamma_0}\int_e\widehat{{\bf u}c}\cdot{\bf n}_e[\zeta]ds\nonumber\\
&&-\sum_{e\in\Gamma_0}\int_e\left(\left\{{\bf D}({\bf u})\nabla c\cdot{\bf n}_e\right\}[\zeta]+\left\{{\bf D}({\bf u})\nabla \zeta\cdot{\bf n}_e\right\}[c]+\frac{\tilde{\alpha}}{|e|}[c][\zeta]\right)ds,\label{sc2d3}
\end{eqnarray}}
where $$c=\mathcal{I}_1\left\{\frac{r}{\Phi}\right\}, \quad \quad\tilde{d}(r)=z_1r+z_2(\Phi-r), \quad\quad(u,v)=\int_\Omega uv dxdy.$$

\begin{remark}
In \eqref{sc2d1}-\eqref{sc2d3}, the usage of ˜$\tilde{d}(r)$ instead of d(c) is required by the positivity-preserving technique. In the proof of Theorems \ref{thm1d2} and \ref{thm2d2}, we will subtract \eqref{sc2d1} from \eqref{sc2d3} to obtain a new source similar to the one in \eqref{sc2d3}. If we use $d(c)$ in \eqref{sc2d1}, then the source in \eqref{sc2d3} should be changed to $c\Phi z_1p_t$.
\end{remark}

In \eqref{sc2d1}-\eqref{sc2d3}, $\hat{p},$ $\hat{\bf u}$ and $\widehat{{\bf u}c}$ are the numerical fluxes. We use alternating fluxes for the diffusion terms, and for any $e\in\Gamma_0$
\begin{equation}\label{flux2d1}
\hat{\bf u}|_e={\bf u}^+|_e,\quad\hat{p}|_e=p^-|_e,
\end{equation}
and on $\partial\Omega$, we take
$$
\hat{\bf u}|_e=0,\quad\hat{p}|_e=p^-|_e,\ \forall\ e\in\partial\Omega_+,\quad\hat{\bf u}|_e=0,\quad\hat{p}|_e=p^+|_e,\ \forall\ e\in\partial\Omega_-.
$$
For the convection term, we use
\begin{equation}\label{flux2d2}
\widehat{{\bf u}c}={\bf u}^+c^+-\alpha[c],
\end{equation}
Here $\alpha$ and $\tilde{\alpha}$ are two positive constants to be chosen by the bound-preserving technique.

Before we complete this subsection, we would like to introduce the following definition that will be used in the bound-preserving technique.
\begin{definition}\label{def}
We say two fluxes $\widehat{uc}$, $\hat{u}$ are consistent if $\widehat{uc}=\hat{u}$ by taking $c=1$ in $\Omega.$
\end{definition}
The numerical flux pair $(\widehat{uc}, \hat{u})$ given in \eqref{flux2d1} and \eqref{flux2d2} are consistent, and this is required by the bound-preserving technique.
\begin{remark}\label{remark}
There are plenty of flux pairs can be used following the procedures introduced in the next subsection. The proofs are basically the same with some minor changes, so we only list some of them below without more details.
\begin{itemize}
\item $\hat{u}=u^-$, $\hat{p}=p^+$, $\widehat{uc}=u^-c^--\alpha[c].$
\item $\hat{u}=\frac12(u^++u^-)$, $\hat{p}=\frac12(p^++p^-)$, $\widehat{uc}=\frac12(u^+c^++u^-c^-)-\alpha[c].$
\end{itemize}
\end{remark}

\section{Bound-preserving technique}
\label{secbb}
\subsection{One space dimension}
\label{secbb1d}
\setcounter{equation}{0}
In this subsection, we consider Euler forward time discretization and apply the bound-preserving technique to construct physically relevant numerical approximations in one space dimension.


Define the finite element space to be
$$
V_h=\left\{v\in L^2(\Omega):v|_{I_j}\in\mathcal {P}^1(I_j),j=1,\cdots,N\right\},
$$
where $I_j$, $j=1,\cdots,N$ is a partition of the computational domain $\Omega=[0,2\pi]$.

Then DG scheme is the following: find $p, u, r\in V_h$ such that for any $\xi,\eta,\zeta\in V_h$
{\setlength\arraycolsep{2pt}\begin{eqnarray}
(\tilde{d}(r)p_t,\xi)&=&(u,\xi_x)+\sum_{j=1}^{N-1}\hat{u}[\xi]_{j+\frac12}+(q,\xi)_j,\label{sc1d1}\\
(a(c)u,\eta)&=&(p,\eta_x)+\sum_{j=0}^N\hat{p}[\eta]_{j+\frac12},\label{sc1d2}\\
(r_t,\zeta)&=&\left(uc-D(u)c_x,\zeta_x\right)+\sum_{j=1}^{N-1}\widehat{uc}[\zeta]_{j+\frac12}-\sum_{j=1}^{N-1}\{D(u)c_x\}[\zeta]_{j+\frac12}\nonumber\\
&&-\sum_{j=1}^{N-1}\{D(u)\zeta_x\}[c]_{j+\frac12}-\sum_{j=1}^{N-1}\frac{\tilde{\alpha}}{h}[c][\zeta]_{j+\frac12}+(\tilde{c}q-rz_1p_t,\zeta),\label{sc1d3}
\end{eqnarray}}
where $c=\mathcal{I}_1\left\{\frac{r}{\Phi}\right\}$, $\tilde{d}(r)=z_1r+z_2(\Phi(x)-r)$ and $(u,v)=\int_\Omega uv dx$.


For simplicity, we use $o_j$ for the numerical approximation $o$ in $I_j$, and the cell average is $\bar{o}_j$ for $o=u,p,c,r$. Moreover, we use $o^n$ to represent the solution $o$ at time level $n$. Take $\zeta=1$ in $I_j$ in \eqref{sc1d3} for $j=2,\cdots,N-1$ to obtain the equation satisfied by $\bar{r},$
\begin{equation}\label{cell_average}
\bar{r}_j^{n+1}=H_j^c(r,u,c)+H_j^d(r,u,c)+H^s_j(r,\tilde{c},q,z_1p_t),
\end{equation}
where
{\setlength\arraycolsep{2pt}\begin{eqnarray*}
H_j^c(r,u,c)&=&\frac13\bar{r}_j^n+\lambda\left(\widehat{uc}_{j-\frac12}-\widehat{uc}_{j+\frac12}\right),\\
H_j^d(r,u,c)&=&\frac13\bar{r}_j^n-\lambda\left(\{D(u)c_x\}_{j-\frac12}+\frac{\tilde{\alpha}}{\Delta x}[c]_{j-\frac12}-
\{D(u)c_x\}_{j+\frac12}-\frac{\tilde{\alpha}}{\Delta x}[c]_{j+\frac12}\right),\\
H^s_j(r,\tilde{c},q,z_1p_t)&=&\frac13\bar{r}_j^n+\Delta t\overline{\tilde{c}q-rz_1p_t},
\end{eqnarray*}}
with $\lambda=\frac{\Delta t}{\Delta x}$ being the ratio of the time and space mesh sizes, $\tilde{\alpha}$ being another parameter that will be chosen by the positivity-preserving technique, and $\overline{\tilde{c}q-rz_1p_t}$ being the cell average of $\tilde{c}q-rz_1p_t$. We will prove that if $\Delta t$ is sufficiently small, then $H_j^c$, $H_j^d$ and $H_j^s$ are all positive, and the results are given in the following three lemmas. For simplicity of presentation, if the denominator in a fraction is zero, then the value of the fraction is defined as $\infty$. Let us consider $H_j^c$ first.
\begin{lemma}\label{lemma1dc}
Suppose $r^n>0$ $(c^n>0)$, then $H_j^c(r,u,c)>0$ if $\alpha$ and $\lambda$ satisfy
\begin{equation}\label{cc}
\alpha>\max_{1\leq j\leq N-1}\left\{u_{j+\frac12}^+,0\right\},\quad\lambda\leq\min_{2\leq j\leq N-1}\left\{\frac{\Phi_m^0}{6\alpha},\frac{\Phi_{j-\frac12}}{6\left(\alpha-u^+_{j-\frac12}\right)}\right\},\quad\Phi_m^0=\min_{2\leq j\leq N-1}\Phi_{j+\frac12}.
\end{equation}
\end{lemma}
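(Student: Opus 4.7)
The plan is to mimic the Zhang--Shu positivity framework but adapted to the non-standard conservative variable $r=\phi c$. The starting point is the two-point Gauss--Lobatto quadrature, which is exact for $\mathcal{P}^1$ on $I_j$ and uses the cell endpoints as nodes. This gives the identity
\[
\bar r_j^n=\tfrac12\bigl(r_{j-\tfrac12}^{+}+r_{j+\tfrac12}^{-}\bigr),
\qquad\text{so}\qquad
\tfrac13\bar r_j^n=\tfrac16 r_{j-\tfrac12}^{+}+\tfrac16 r_{j+\tfrac12}^{-}.
\]
Plugging in the explicit flux $\widehat{uc}_{k+\frac12}=u_{k+\frac12}^{+}c_{k+\frac12}^{+}-\alpha\bigl(c_{k+\frac12}^{+}-c_{k+\frac12}^{-}\bigr)$ and regrouping, I would rewrite $H_j^c$ as an affine combination of the four interface traces $c_{j-\tfrac12}^{\pm}$ and $c_{j+\tfrac12}^{\pm}$, with coefficients involving $\Phi_{j\pm\frac12}$, $u_{j\pm\frac12}^{+}$, $\alpha$ and $\lambda$.

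Next I would examine the coefficients one interface at a time. The weights of $c_{j-\tfrac12}^{-}$ and $c_{j+\tfrac12}^{+}$ (the traces inherited from the neighbouring cells) come out to $\lambda\alpha$ and $\lambda\bigl(\alpha-u_{j+\tfrac12}^{+}\bigr)$ respectively, which are non-negative as soon as $\alpha>\max_k\{u_{k+\tfrac12}^{+},0\}$; together with the hypothesis $r^n>0$ (equivalently $c^n>0$), these contributions are non-negative. For the weights of $c_{j-\tfrac12}^{+}$ and $c_{j+\tfrac12}^{-}$, I would use the fact that $c=\mathcal I_1(r/\Phi)$ is the nodal interpolant and $\Phi$ is continuous at the interfaces, so that $r_{j\pm\tfrac12}^{\pm}=\Phi_{j\pm\tfrac12}c_{j\pm\tfrac12}^{\pm}$. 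Substituting this identity turns the remaining contribution into
\[
\Bigl(\tfrac{1}{6}\Phi_{j-\tfrac12}+\lambda\bigl(u_{j-\tfrac12}^{+}-\alpha\bigr)\Bigr)c_{j-\tfrac12}^{+}
+\Bigl(\tfrac{1}{6}\Phi_{j+\tfrac12}-\lambda\alpha\Bigr)c_{j+\tfrac12}^{-}.
\]

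Requiring both bracketed coefficients to be non-negative yields exactly the two CFL bounds
\[
\lambda\le\frac{\Phi_{j+\tfrac12}}{6\alpha},\qquad \lambda\le\frac{\Phi_{j-\tfrac12}}{6\bigl(\alpha-u_{j-\tfrac12}^{+}\bigr)},
\]
and taking the infimum over $j$ reproduces the condition \eqref{cc} in the lemma (with the first bound coming from the uniform lower bound $\Phi_m^0$). Combined with the positivity of the neighbour-trace contributions established above, this gives $H_j^c>0$ whenever $r^n>0$, as claimed.

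The algebra above is routine. The conceptually delicate point, and the one I expect to be the main obstacle, is the rewriting $r=\Phi c$ at the cell interfaces. This identity is not automatic: it holds precisely because $c$ has been defined as the tensor-product nodal interpolant $\mathcal I_1(r/\Phi)$ and $\Phi$ is globally continuous across the mesh. Without this choice, the cell-average piece $\tfrac16 r_{j\pm\tfrac12}^{\pm}$ and the flux piece $\lambda(\cdot)c_{j\pm\tfrac12}^{\pm}$ would live in incompatible variables, the convex-combination argument would not close, and the CFL bounds expressed in terms of $\Phi_{j\pm\tfrac12}$ would not arise. Thus the bound-preserving argument and the choice of numerical variable are tightly coupled, which is what makes the fourth and fifth bullet items in Section~\ref{secldg2d} crucial rather than cosmetic.
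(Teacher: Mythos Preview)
Your proposal is correct and follows essentially the same approach as the paper: both expand $\tfrac13\bar r_j^n$ via the endpoint identity $\bar r_j^n=\tfrac12(r_{j-\frac12}^{+}+r_{j+\frac12}^{-})$, convert $r_{j\pm\frac12}^{\pm}$ to $\Phi_{j\pm\frac12}c_{j\pm\frac12}^{\pm}$, insert the numerical flux, and read off the same four coefficients. Your explicit remark that the identity $r_{j\pm\frac12}^{\pm}=\Phi_{j\pm\frac12}c_{j\pm\frac12}^{\pm}$ hinges on $c=\mathcal I_1(r/\Phi)$ and the continuity of $\Phi$ is a useful clarification the paper leaves implicit.
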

\begin{proof}
It is easy to check
{\setlength\arraycolsep{2pt}\begin{eqnarray*}
H_j^c(r,u,c)&=&\frac13\bar{r}_j^n+\lambda\left(\widehat{uc}_{j-\frac12}-\widehat{uc}_{j+\frac12}\right)\\
&=&\frac16\left(c^+\Phi_{j-\frac12}+c^-\Phi_{j+\frac12}\right)+\lambda\left(u^+c^+_{j-\frac12}-\alpha[c]_{j-\frac12}-u^+c^+_{j+\frac12}+\alpha[c]_{j+\frac12}\right)\\
&=&\alpha\lambda c^-_{j-\frac12}+\left(\frac16\Phi_{j-\frac12}+\lambda u^+_{j-\frac12}-\alpha\lambda\right)c^+_{j-\frac12}+\left(\frac16\Phi_{j+\frac12}-\alpha\lambda\right)c^-_{j+\frac12}+\lambda\left(\alpha-u^+_{j+\frac12}\right)c^+_{j+\frac12}.
\end{eqnarray*}}
Therefore, $H_j^c(r,u,c)>0$, if $\alpha$ and $\lambda$ satisfy condition \eqref{cc}.
\end{proof}

Now, we proceed to analyze $H_j^d$. Following the same analysis in \cite{yifan} with some minor changes, we can show the following lemma.
\begin{lemma}\label{lemma1dd}
Suppose $r^n>0$ $(c^n>0)$, then $H_j^d(r,u,c)>0$ under the condition
\begin{equation}\label{cd}
\tilde{\alpha}>\frac12D_M^0,\quad\Lambda=\frac{\Delta t}{\Delta x^2}\leq\min_{2\leq j\leq N-1}\frac{\Phi_{j\pm\frac12}}{3D_{j\pm\frac12}^\mp+6\tilde{\alpha}},\quad D_M^0=\max_{2\leq j\leq N-1}D^\pm_{j\pm\frac12}.
\end{equation}
\end{lemma}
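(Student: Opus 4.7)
The plan is to follow the diffusion-term argument of \cite{yifan} adapted to the variable porosity $\Phi$. Since $r\in\mathcal{P}^1(I_j)$, the cell average decomposes as
\[
\bar r_j^n \;=\; \tfrac12\bigl(r_{j-\frac12}^{+}+r_{j+\frac12}^{-}\bigr)\;=\;\tfrac12\bigl(\Phi_{j-\frac12}\,c_{j-\frac12}^{+}+\Phi_{j+\frac12}\,c_{j+\frac12}^{-}\bigr),
\]
using $c = \mathcal{I}_1\{r/\Phi\}$ at the interior endpoints. Likewise, $c_x$ is constant on each cell with value $c_x|_{I_j}=(c_{j+\frac12}^{-}-c_{j-\frac12}^{+})/\Delta x$. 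The idea is then to insert these identities into $H_j^d$ and reorganize it as an explicit linear combination of the six ``neighbour'' values
\[
c_{j-\frac32}^{+},\quad c_{j-\frac12}^{-},\quad c_{j-\frac12}^{+},\quad c_{j+\frac12}^{-},\quad c_{j+\frac12}^{+},\quad c_{j+\frac32}^{-},
\]
each of which is non-negative by the inductive hypothesis $r^n>0$. Positivity of $H_j^d$ then reduces to verifying that all six coefficients are non-negative under \eqref{cd}.

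The first step is to split the $\frac13\bar r_j^n$ piece as $\tfrac16\Phi_{j-\frac12}c_{j-\frac12}^{+}+\tfrac16\Phi_{j+\frac12}c_{j+\frac12}^{-}$, keeping the porosity factors in place; these are the only strictly positive contributions that bound the inner values from below. Next, I would expand $\{D(u)c_x\}_{j\pm\frac12}$ into one contribution from the cell $I_j$ (using $c_x^{(j)}$) and one from the adjacent cell. Combining the two $c_x^{(j)}$-pieces into $\tfrac{\lambda}{2\Delta x}\bigl(D^{-}_{j+\frac12}+D^{+}_{j-\frac12}\bigr)\bigl(c_{j+\frac12}^{-}-c_{j-\frac12}^{+}\bigr)$ produces only "inner vs. inner" redistribution, and this is the contribution that fixes the CFL-type bound on $\Lambda$.

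The second step is the bookkeeping for the penalty terms $\mp\tfrac{\lambda\tilde\alpha}{\Delta x}[c]_{j\pm\frac12}$. Expanding each jump splits it into a $-\tilde\alpha/\Delta x$ contribution to the interior trace $c_{j\mp\frac12}^{\pm}$ and a $+\tilde\alpha/\Delta x$ contribution to the neighbouring trace $c_{j\mp\frac12}^{\mp}$. Grouping everything gives three types of coefficients to bound. The outer-neighbour coefficients, attached to $c_{j-\frac32}^{+}$ and $c_{j+\frac32}^{-}$, come purely from the $c_x$ of the adjacent cells and are automatically of the right sign after the $\tfrac12$ averaging. The coefficients on the "outer" traces $c_{j-\frac12}^{-}$ and $c_{j+\frac12}^{+}$ receive $+\tilde\alpha\lambda/\Delta x$ from the penalty and at worst $-\tfrac{\lambda}{2\Delta x}D^\pm_{j\pm\frac12}$ from the diffusion average; requiring non-negativity gives the penalty condition $\tilde\alpha>\tfrac12 D_M^0$. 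Finally, the inner coefficients on $c_{j-\frac12}^{+}$ and $c_{j+\frac12}^{-}$ are of the form $\tfrac{\Phi_{j\mp\frac12}}{6}-\tfrac{\lambda}{\Delta x}\bigl(\tilde\alpha+\tfrac12 D^{\mp}_{j\pm\frac12}\bigr)$, which is non-negative precisely under the stated bound on $\Lambda=\Delta t/\Delta x^2$.

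The main obstacle is purely the bookkeeping: keeping careful track of which trace belongs to which cell when expanding the averaged diffusive fluxes, and verifying that every negative contribution is absorbed by either the porosity bound $\Phi/6$ (after suitable CFL on $\Lambda$) or the penalty $\tilde\alpha$. One subtle point is that ${\bf D}$ is only positive semidefinite, so individual $D^\pm_{j\pm\frac12}$ may vanish; this is harmless because the conditions \eqref{cd} are uniform in $D_M^0$ and the conclusion degrades continuously. No boundary treatment is needed here since cells $j=1$ and $j=N$ are excluded by the range $2\le j\le N-1$ in \eqref{cd}.
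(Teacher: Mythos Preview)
Your proposal is correct and follows essentially the same approach as the paper's own proof: both write $H_j^d$ as an explicit linear combination of the six trace values $c_{j-\frac32}^{+},c_{j-\frac12}^{-},c_{j-\frac12}^{+},c_{j+\frac12}^{-},c_{j+\frac12}^{+},c_{j+\frac32}^{-}$ (using $\bar r_j^n=\tfrac12(\Phi_{j-\frac12}c_{j-\frac12}^{+}+\Phi_{j+\frac12}c_{j+\frac12}^{-})$ and the piecewise-linear expression for $c_x$), and then check that each coefficient is nonnegative under \eqref{cd}. The only minor remark is that the exact inner coefficients in the paper also carry a positive cross-term $\pm\tfrac12\Lambda(D^{+}_{j-\frac12}-D^{-}_{j+\frac12})$ that you have (harmlessly) dropped in your lower bound; otherwise your bookkeeping matches the paper line by line.
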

\begin{proof}
Notice that $c$ is a linear function in each cell, then
$$
{c_x}_{j-\frac12}^-=\frac{c^-_{j-\frac12}-c^+_{j-\frac32}}{\Delta x},\quad
{c_x}_{j-\frac12}^+=\frac{c^-_{j+\frac12}-c^+_{j-\frac12}}{\Delta x}={c_x}_{j+\frac12}^-,\quad
{c_x}_{j+\frac12}^+=\frac{c^-_{j+\frac32}-c^+_{j+\frac12}}{\Delta x}.
$$
Therefore,
{\setlength\arraycolsep{2pt}\begin{eqnarray*}
H_j^d(r,u,s)&=&\frac13\bar{r}_j^n-\lambda\left(\{D(u)c_x\}_{j-\frac12}+\frac{\tilde{\alpha}}{\Delta x}[c]_{j-\frac12}-
\{D(u)c_x\}_{j+\frac12}-\frac{\tilde{\alpha}}{\Delta x}[c]_{j+\frac12}\right),\\
&=&\frac16\Phi_{j-\frac12}c^+_{j-\frac12}-\frac12\Lambda D^-_{j-\frac12}\left(c^-_{j-\frac12}-c^+_{j-\frac32}\right)-\frac12\Lambda D^+_{j-\frac12}\left(c^-_{j+\frac12}-c^+_{j-\frac12}\right)-\Lambda\tilde{\alpha}\left(c^+_{j-\frac12}-c^-_{j-\frac12}\right)\\
&&+\frac16\Phi_{j+\frac12}c^-_{j+\frac12}+\frac12\Lambda D^-_{j+\frac12}\left(c^-_{j+\frac12}-c^+_{j-\frac12}\right)+\frac12\Lambda D^+_{j+\frac12}\left(c^-_{j+\frac32}-c^+_{j+\frac12}\right)+\Lambda\tilde{\alpha}\left(c^+_{j+\frac12}-c^-_{j+\frac12}\right)\\
&=&\frac12\Lambda D^-_{j-\frac12}c^+_{j-\frac32}+\Lambda\left(\tilde{\alpha}-\frac12D^-_{j-\frac12}\right)c^-_{j-\frac12}+\left(\frac16\Phi_{j-\frac12}+\frac12\Lambda D^+_{j-\frac12}-\frac12\Lambda D^-_{j+\frac12}-\Lambda\tilde{\alpha}\right)c^+_{j-\frac12}\\
&&+\left(\frac16\Phi_{j+\frac12}-\frac12\Lambda D^+_{j-\frac12}+\frac12\Lambda D^-_{j+\frac12}-\Lambda\tilde{\alpha}\right)c^-_{j+\frac12}+\Lambda\left(\tilde{\alpha}-\frac12D^+_{j+\frac12}\right)c^+_{j+\frac12}+\frac12\Lambda D^+_{j+\frac12}c^-_{j+\frac32},
\end{eqnarray*}}
where $D^\pm_{j-\frac12}=D(u^\pm_{j-\frac12})\geq0.$ Therefore, we have $H_j^d(r,u,s)>0$ if
\eqref{cd} is satisfied.
\end{proof}

Next, we proceed to study $H_j^s$. We use Gaussian quadrature with two points to approximate the cell average of the source. The quadrature points on $I_j$ are denoted as $x^i_j$, $i=1,2$. Also, we denote $w_i$ as the corresponding weights on the interval $[-\frac12,\frac12]$. Then we can state the result.
\begin{lemma}\label{lemma1ds}
Suppose $r^n>0$ $(c^n>0)$, then $H_j^s(r,\tilde{c},q,z_1p_t)>0$ under the conditions
\begin{equation}\label{cs}
\Delta t\leq\frac{1}{6z_1p_M},\quad\Delta\leq\min_{1\leq j\leq N}\frac{\min\{\Phi_{j-\frac12},\Phi_{j+\frac12}\}}{6\max\left\{-q(x^1_j),-q(x^2_j),0\right\}}.
\end{equation}
where
\begin{equation}\label{pt}
p_M=\max_{i,j}\left(p_t(x^i_j),0\right).
\end{equation}
\end{lemma}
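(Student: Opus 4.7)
The plan is to represent both $\bar r_j^n$ and the source cell average via the two-point Gaussian quadrature and then show that the contribution at each Gauss node is nonnegative under the stated conditions. Since $r^n$ is linear on $I_j$, the two-point Gauss rule integrates it exactly, so $\bar r_j^n=\sum_{i=1}^2 w_i r^n(x_j^i)$; the same rule is used to approximate $\overline{\tilde cq-rz_1p_t}$. Hence
\[
H_j^s(r,\tilde c,q,z_1p_t)=\sum_{i=1}^2 w_i\!\left[\tfrac13 r^n(x_j^i)+\Delta t\,\tilde c(x_j^i)q(x_j^i)-\Delta t\,z_1\,r^n(x_j^i)p_t(x_j^i)\right],
\]
and it suffices to show that each bracketed expression $G_j^i$ is strictly positive.

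I would split the coefficient $\tfrac13$ as $\tfrac16+\tfrac16$, associating one sixth with the pressure-time term and the other with the source term. The pressure part is $r^n(x_j^i)\bigl(\tfrac16-\Delta t\,z_1 p_t(x_j^i)\bigr)$, which is automatically nonnegative when $p_t(x_j^i)\le0$ and, using the definition \eqref{pt} of $p_M$, is nonnegative whenever $\Delta t\le 1/(6z_1p_M)$. This part only relies on the hypothesis $r^n>0$.

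The main obstacle is the remaining pair $\tfrac16 r^n(x_j^i)+\Delta t\,\tilde c(x_j^i)q(x_j^i)$, because $c=\mathcal I_1(r/\Phi)$ is only equal to $r/\Phi$ at the cell endpoints, not at interior Gauss points. When $q(x_j^i)\ge0$ the point is an injection point, $\tilde c$ is nonnegative data, and the term is already nonnegative. When $q(x_j^i)<0$ we have $\tilde c=c$, which can make the term negative. The key observation I would use is that both $r^n$ and $c$ are linear on $I_j$, with $c^+_{j-1/2}=r^+_{j-1/2}/\Phi_{j-1/2}$ and $c^-_{j+1/2}=r^-_{j+1/2}/\Phi_{j+1/2}$. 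Writing $x_j^i=(1-\alpha_i)x_{j-1/2}+\alpha_i x_{j+1/2}$ with $\alpha_i\in(0,1)$, linearity yields $r^n(x_j^i)=(1-\alpha_i)r^+_{j-1/2}+\alpha_i r^-_{j+1/2}$ and
\[
c(x_j^i)=(1-\alpha_i)\frac{r^+_{j-1/2}}{\Phi_{j-1/2}}+\alpha_i\frac{r^-_{j+1/2}}{\Phi_{j+1/2}}\le\frac{r^n(x_j^i)}{\min\{\Phi_{j-1/2},\Phi_{j+1/2}\}}.
\]
Since $q(x_j^i)<0$, multiplying reverses the inequality, giving $\tilde c(x_j^i)q(x_j^i)\ge r^n(x_j^i)q(x_j^i)/\min\{\Phi_{j-1/2},\Phi_{j+1/2}\}$, so the second pair is bounded below by $r^n(x_j^i)\bigl[\tfrac16+\Delta t\,q(x_j^i)/\min\{\Phi_{j-1/2},\Phi_{j+1/2}\}\bigr]$. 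This is nonnegative precisely when $\Delta t\le\min\{\Phi_{j-1/2},\Phi_{j+1/2}\}/(6\,(-q(x_j^i)))$, which is the second bound in \eqref{cs}. Combining the two estimates and summing over the Gauss nodes with positive weights produces $H_j^s>0$, completing the plan.
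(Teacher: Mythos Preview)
Your proposal is correct and follows essentially the same approach as the paper: Gaussian quadrature for both $\bar r_j^n$ and the source average, the split $\tfrac13=\tfrac16+\tfrac16$, the same treatment of the $p_t$ term, and the same case distinction on the sign of $q$. The only cosmetic difference is in the $q<0$ case: the paper substitutes $r^+_{j-1/2}=c^+_{j-1/2}\Phi_{j-1/2}$, $r^-_{j+1/2}=c^-_{j+1/2}\Phi_{j+1/2}$ directly to obtain the exact endpoint decomposition
\[
L^2_1=\Bigl(\tfrac16\Phi_{j-\frac12}+\Delta t\,q(x^1_j)\Bigr)\mu_1 c^+_{j-\frac12}+\Bigl(\tfrac16\Phi_{j+\frac12}+\Delta t\,q(x^1_j)\Bigr)\mu_2 c^-_{j+\frac12},
\]
whereas you reach the same time-step restriction via the inequality $c(x_j^i)\le r^n(x_j^i)/\min\{\Phi_{j-\frac12},\Phi_{j+\frac12}\}$; both routes yield the identical condition.
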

\begin{proof} The cell average of the source is approximated by the Gaussian quadrature, then
$$
H_j^s(r,u,c)=\frac13\bar{r}_j^n+\Delta t\overline{\tilde{c}q-rz_1p_t}=\sum_{i=1}^2w_i(L^1_i+L^2_i),
$$
where
$$
L^1_i=\left(\frac16-\Delta tz_1p_t(x^i_j)\right)r_j(x^i_j),\quad L^2_i=\frac16r_j(x^i_j)+\Delta t\tilde{c}q(x^i_j).
$$
Clearly, we have $L^1_i\geq 0$ if \eqref{cs} is satisfied. Moreover, $L^2_i>0$ if $q(x^i_j)\geq0$. We only need to consider the case with $q(x^i_j)<0$. Without loss of generality, we assume $q(x^1_j)<0$, then $\tilde{c}(x^1_j)=c(x^1_j)$. Notice that $r$ and $c$ are both linear functions in $I_j$ then it is easy to check
$$
r(x_j^1)=\mu_1r_{j-\frac12}^++\mu_2r_{j+\frac12}^-,\quad c(x_j^1)=\mu_1c_{j-\frac12}^++\mu_2c_{j+\frac12}^-,
$$
with $\mu_1=\frac{3+\sqrt{3}}{6}$ and $\mu_2=\frac{3-\sqrt{3}}{6}$. Therefore
{\setlength\arraycolsep{2pt}\begin{eqnarray*}
L^2_1&=&\frac16(\mu_1r_{j-\frac12}^++\mu_2r_{j+\frac12}^-)+\Delta t(\mu_1c_{j-\frac12}^++\mu_2c_{j+\frac12}^-)q(x^1_j)\\
&=&\left(\frac16\Phi_{j-\frac12}+\Delta tq(x^1_j)\right)\mu_1c_{j-\frac12}^++\left(\frac16\Phi_{j+\frac12}+\Delta tq(x^1_j)\right)\mu_2c_{j+\frac12}^-.
\end{eqnarray*}}
then $L^2_1>0$ under \eqref{cs}.
\end{proof}
Now we study the cells near $\partial\Omega$. For simplicity of presentation, we only demonstrate the results below, and the proof can be obtained exactly the same way in Lemmas \ref{lemma1dc}, \ref{lemma1dd} and \ref{lemma1ds} with some minor changes.
\begin{lemma}\label{lemma1db1}
Suppose $r^n_1>0$ $(c^n_1>0)$, then $\bar{r}_1^{n+1}>0$ under conditions \eqref{cs} and
$$
\alpha>u_{\frac32}^+,\quad\lambda\leq\frac{\Phi_\frac32}{6\alpha},\quad\textrm{and}\quad
\tilde{\alpha}>\frac12D_{\frac32}^+,\quad\Lambda\leq\min\left\{\frac{\Phi_\frac12}{3D_{\frac32}^-},\frac{\Phi_{\frac32}}{6\tilde{\alpha}}\right\}.
$$
Similarly, suppose $r^n_N>0$ $(c^n_N>0)$, then $\bar{r}_N^{n+1}>0$ under conditions \eqref{cs} and
$$
\lambda\leq\frac{\Phi_{N-\frac12}}{6\left(\alpha-u^+_{N-\frac12}\right)},\textrm{\quad and\quad}\tilde{\alpha}>\frac12D_{N-\frac12}^-,\quad\Lambda\leq\min\left\{\frac{\Phi_{N-\frac12}}{6\tilde{\alpha}},\frac{\Phi_{N+\frac12}}{3D_{N-\frac12}^+}\right\}.
$$
\end{lemma}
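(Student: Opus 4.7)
The plan is to reuse the three-way split $\bar r_j^{n+1}=H_j^c+H_j^d+H_j^s$ from the interior lemmas and track the modifications caused by the boundary. The key point is that the sums in the DG scheme \eqref{sc1d1}--\eqref{sc1d3} run only over interior interfaces ($j=1,\dots,N-1$) and the numerical fluxes satisfy $\hat{u}|_e=0$ on $\partial\Omega$, so when I take $\zeta=1$ on $I_1$ and $\zeta=0$ elsewhere, all boundary contributions at $x_{1/2}$ automatically drop out. Thus the cell-average equation for $\bar r_1^{n+1}$ is exactly the formula \eqref{cell_average} with the $x_{j-1/2}$ terms removed. I distribute $\frac13\bar r_1^n=\frac16(c^+_{1/2}\Phi_{1/2}+c^-_{3/2}\Phi_{3/2})$ equally to the three pieces and show each is nonnegative.

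For $H_1^c$, substituting $\widehat{uc}_{3/2}=u^+_{3/2}c^+_{3/2}-\alpha[c]_{3/2}$ and regrouping in the basis $\{c^+_{1/2},c^-_{3/2},c^+_{3/2}\}$ gives
\begin{equation*}
H_1^c = \tfrac{1}{6}\Phi_{1/2}\,c^+_{1/2} + \bigl(\tfrac{1}{6}\Phi_{3/2}-\lambda\alpha\bigr)c^-_{3/2} + \lambda(\alpha-u^+_{3/2})\,c^+_{3/2},
\end{equation*}
which is nonnegative precisely when $\alpha>u^+_{3/2}$ and $\lambda\le\Phi_{3/2}/(6\alpha)$. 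For $H_1^d$, I expand $\{D(u)c_x\}_{3/2}$ using the same piecewise-linear derivative formulas as in Lemma \ref{lemma1dd} and collect to obtain
\begin{equation*}
H_1^d = \bigl(\tfrac{1}{6}\Phi_{1/2}-\tfrac{1}{2}\Lambda D^-_{3/2}\bigr)c^+_{1/2} + \bigl(\tfrac{1}{6}\Phi_{3/2}+\tfrac{1}{2}\Lambda D^-_{3/2}-\Lambda\tilde\alpha\bigr)c^-_{3/2} + \Lambda\bigl(\tilde\alpha-\tfrac{1}{2}D^+_{3/2}\bigr)c^+_{3/2} + \tfrac{1}{2}\Lambda D^+_{3/2}\,c^-_{5/2},
\end{equation*}
whose four coefficients are nonnegative under $\tilde\alpha>\tfrac12 D^+_{3/2}$ together with $\Lambda\le\Phi_{1/2}/(3D^-_{3/2})$ (first coefficient) and $\Lambda\le\Phi_{3/2}/(6\tilde\alpha)$ (second coefficient, where the $\tfrac12\Lambda D^-_{3/2}$ term only helps). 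These match the stated bounds exactly. For $H_1^s$, the Gaussian-quadrature argument of Lemma \ref{lemma1ds} applies verbatim under \eqref{cs} since the source term is cellwise.

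The cell $I_N$ is handled by a mirror-image computation: taking $\zeta=1$ on $I_N$ kills the $x_{N+1/2}$ contributions, leaving only the $x_{N-1/2}$ interior terms. The roles of $c^+_{j-1/2}$ and $c^-_{j+1/2}$ are swapped and the convective survivor is $\lambda(\alpha-u^+_{N-1/2})c^+_{N-1/2}$, which forces $\lambda\le\Phi_{N-1/2}/[6(\alpha-u^+_{N-1/2})]$ in place of the left-boundary analogue (the condition $\alpha>u^+_{N-1/2}$ already follows from \eqref{cc} in Lemma \ref{lemma1dc}). The diffusion piece produces the same four-term pattern with indices shifted, giving $\tilde\alpha>\tfrac12 D^-_{N-1/2}$ and the stated bound on $\Lambda$. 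I do not anticipate any real obstacle: the argument is a careful bookkeeping exercise that tracks which one-sided traces survive under the conventions $v^-|_e=0$ on $\partial\Omega_-$ and $v^+|_e=0$ on $\partial\Omega_+$, and then reuses the positivity patterns already established in the three interior lemmas.
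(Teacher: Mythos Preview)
Your proposal is correct and takes essentially the same approach as the paper, which in fact omits the proof entirely and states only that it ``can be obtained exactly the same way in Lemmas \ref{lemma1dc}, \ref{lemma1dd} and \ref{lemma1ds} with some minor changes.'' You have carried out precisely those changes: dropping the boundary interface terms (since the sums in \eqref{sc1d1}--\eqref{sc1d3} exclude $x_{1/2}$ and $x_{N+1/2}$), reusing the three-way split into $H^c$, $H^d$, $H^s$, and reading off the coefficient conditions, all of which match the statement.
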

Based on the above lemmas, we can state the following theorem.
\begin{theorem}\label{thm1d1}
Consider the DG scheme \eqref{sc1d1}-\eqref{sc1d3} with Euler forward time discretization. Suppose $r^n>0$ $(c^n>0)$, and the parameters $\alpha$ and $\tilde{\alpha}$ are taken to be
$$
\alpha>\max_{1\leq j\leq N-1}\left\{u_{j+\frac12}^+,0\right\},\quad\tilde{\alpha}>\frac12D_M,\quad D_M=\max_{1\leq j\leq N-1}\left\{D^\pm_{j+\frac12}\right\}.
$$
Then $\bar{r}^{n+1}_j>0$ under \eqref{cs} and
$$
\lambda\leq\min_{1\leq j\leq N-1}\left\{\frac{\Phi_m}{6\alpha},\frac{\Phi_{j+\frac12}}{6\left(\alpha-u^+_{j+\frac12}\right)}\right\},\quad\Lambda\leq\min_{1\leq j\leq N}\frac{\Phi_{j\pm\frac12}}{6\tilde{\alpha}+3D^\mp_{j\pm\frac12}},\quad \Phi_m=\min_{1\leq j\leq N-1}\Phi_{j+\frac12}.
$$
\end{theorem}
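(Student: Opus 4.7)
The plan is to use the decomposition \eqref{cell_average} to split $\bar{r}_j^{n+1}$ into the convective, diffusive, and source contributions $H_j^c$, $H_j^d$, and $H_j^s$, and to show that each of these three pieces is positive (or nonnegative, with at least one strict) under the stated CFL-type restrictions. For the interior cells $2 \le j \le N-1$, this amounts to applying Lemmas \ref{lemma1dc}, \ref{lemma1dd}, and \ref{lemma1ds} in sequence; for the two boundary cells $j=1$ and $j=N$, Lemma \ref{lemma1db1} plays the analogous role. Since $\bar{r}_j^{n+1}$ is then a sum of positive quantities, the only real content is to verify that the single set of conditions listed in the theorem is strong enough to imply each of the hypotheses in the three lemmas simultaneously.

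First I would handle the convective piece. The condition $\alpha > \max_{1 \le j \le N-1}\{u^+_{j+1/2}, 0\}$ in the theorem is identical to the one in Lemma \ref{lemma1dc}. The theorem's bound on $\lambda$ is taken as a minimum over $1 \le j \le N-1$, which is a strict superset of the range $2 \le j \le N-1$ appearing in \eqref{cc}; moreover, $\Phi_m \le \Phi_m^0$, so the theorem's $\lambda$-bound immediately forces \eqref{cc}. Next, for the diffusive piece, $D_M^0 \le D_M$, and the minimum defining the theorem's $\Lambda$-bound is taken over the larger index set $1 \le j \le N$, so condition \eqref{cd} of Lemma \ref{lemma1dd} is subsumed. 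For the source piece, the conditions \eqref{cs} are inherited verbatim, so Lemma \ref{lemma1ds} applies directly.

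The second step is to treat the two boundary cells. Here the only subtlety is that $\Phi_m$ in the theorem is a minimum over $1 \le j \le N-1$, while Lemma \ref{lemma1db1} involves the individual values $\Phi_{3/2}$, $\Phi_{1/2}$, $\Phi_{N-1/2}$, $\Phi_{N+1/2}$; because each of the relevant fractions $\Phi_\ast/(6\alpha)$, $\Phi_\ast/(6\tilde{\alpha})$, $\Phi_\ast/\bigl(3 D^{\mp}_{\cdot}+6\tilde{\alpha}\bigr)$, etc., is monotone in its numerator and the theorem takes the minimum over the widest index range, every bound appearing in Lemma \ref{lemma1db1} is dominated by the unified bound in the theorem. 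Assembling the pieces then gives $\bar{r}_j^{n+1} > 0$ for every $j$.

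The main obstacle is purely bookkeeping: keeping the $\pm/\mp$ indices in the diffusion bound consistent with the actual stencil weights arising in the proof of Lemma \ref{lemma1dd}, and checking that the minima over $1 \le j \le N$ versus $2 \le j \le N-1$ align correctly once the boundary cells are folded in. No new analytical idea is introduced beyond what is already contained in Lemmas \ref{lemma1dc}--\ref{lemma1db1}; once the index and monotonicity checks are done, the theorem follows by summing the three nonnegative contributions in \eqref{cell_average}.
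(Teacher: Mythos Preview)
Your proposal is correct and follows essentially the same route as the paper: treat the boundary cells $j=1,N$ via Lemma~\ref{lemma1db1} and the interior cells via the decomposition \eqref{cell_average} together with Lemmas~\ref{lemma1dc}--\ref{lemma1ds}. The paper's own proof is a two-sentence appeal to these lemmas; your write-up is more explicit about why the unified parameter constraints in the theorem dominate the individual constraints appearing in each lemma, but no different idea is involved.
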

\begin{proof}
For $j=0$ and $N$, the results was proved in Lemma \ref{lemma1db1}. For $j=1,2,\cdots,N-1$, by \eqref{cell_average}, we only need $H_j^c$, $H_j^d$ and $H_j^s$ to be positive, which follow directly from Lemmas \ref{lemma1dc}-\ref{lemma1ds}, respectively.
\end{proof}

The above theorem guarantees the positivity of $\bar{r}^{n+1}$. However, we still need to show $\bar{r}^{n+1}\leq\bar{\Phi},$ and the result is given below.
\begin{theorem}\label{thm1d2}
Suppose the conditions in Theorem \ref{thm1d1} are satisfied. Moreover, we assume $0\leq r^n\leq \Phi$ and the flux pair $(\widehat{uc}, \hat{u})$ are consistent, then $0\leq\bar{r}^{n+1}\leq\bar{\Phi},$ under another condition
$$
\Delta t\leq\frac{1}{6z_2p_M},
$$
where $p_M$ was given in \eqref{pt}.
\end{theorem}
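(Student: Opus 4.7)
The plan is to reduce the desired upper bound to a positivity statement for the complementary variable. Set $s = \Phi - r$; since $\Phi$ is time independent, one has $\bar{s}_j^{n+1} = \bar{\Phi}_j - \bar{r}_j^{n+1}$, so $\bar{r}_j^{n+1} \leq \bar{\Phi}_j$ is equivalent to $\bar{s}_j^{n+1} \geq 0$. My strategy is to derive a cell-average update for $s$ that has exactly the same template as \eqref{cell_average} but with the roles of the two components exchanged, so that Lemmas \ref{lemma1dc}, \ref{lemma1dd}, \ref{lemma1ds}, and \ref{lemma1db1} can be recycled verbatim with $(r,c,\tilde c,z_1)\mapsto(s,c_2,\tilde c_2,z_2)$.

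To obtain the update for $\bar{s}_j^{n+1}$, I would take $\xi=\zeta=1_{I_j}$ in \eqref{sc1d1} and \eqref{sc1d3} and subtract. The consistency of the flux pair lets me define $\widehat{u c_2}:=\hat u-\widehat{uc}$; plugging in $\hat u=u^+$ and $\widehat{uc}=u^+c^+-\alpha[c]$, and using $c_2=1-c$ together with the identity $[c]=-[c_2]$, gives
$$
\widehat{u c_2}=u^+c_2^+-\alpha[c_2],
$$
i.e., the same upwind-plus-penalty form as $\widehat{uc}$. Analogously, $c+c_2=1$ forces $c_x=-(c_2)_x$ and $[c]=-[c_2]$, so after the subtraction the diffusion fluxes $\{D(u)c_x\}$ and the jump penalties $\tilde\alpha[c]/\Delta x$ transform cleanly into their $c_2$-counterparts with matching signs, reproducing the structure of $H_j^d$ in \eqref{cell_average}.

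The source cancels precisely because of the choice $\tilde d(r)=z_1 r+z_2(\Phi-r)=z_1 r+z_2 s$ built into \eqref{sc1d1}. Indeed,
$$
-\overline{\tilde d(r)\,p_t}+\bar q-\overline{\tilde c q-rz_1 p_t}=-\overline{z_2 s\,p_t}+\overline{(1-\tilde c)q}=\overline{\tilde c_2 q-z_2 s\,p_t},
$$
with $\tilde c_2:=1-\tilde c$, which is exactly the source in \eqref{cell_average} after the substitutions above. The agreement $\tilde c_2=c_2$ at production points (where $\tilde c=c$) is exactly what is needed to re-run the $q<0$ branch of Lemma \ref{lemma1ds}. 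This is precisely the motivation pointed out in the remark after \eqref{sc2d3}: using $\tilde d(r)$ instead of $d(c)$ in the pressure equation is what makes this cancellation clean.

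Putting these pieces together, $\bar{s}_j^{n+1}$ obeys an update of the same form as \eqref{cell_average} with $(r,c,\tilde c,z_1)\mapsto(s,c_2,\tilde c_2,z_2)$. The hypothesis $0\le r^n\le \Phi$ yields $0\le s^n\le\Phi$ and hence $0\le c_2^n\le 1$, and the coefficients $u^\pm$, $D^\pm$, $\Phi$, $q$ are untouched, so Lemmas \ref{lemma1dc}, \ref{lemma1dd}, \ref{lemma1db1} impose no new CFL conditions beyond those of Theorem \ref{thm1d1}. The only new restriction comes from the $z_1\mapsto z_2$ version of Lemma \ref{lemma1ds}, namely $\Delta t\le 1/(6z_2 p_M)$, exactly as stated; combined with Theorem \ref{thm1d1} this yields $0\le\bar r^{n+1}\le\bar\Phi$. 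The main obstacle is purely bookkeeping: keeping track of the sign flips from $[c]=-[c_2]$ and $c_x=-(c_2)_x$ through the subtraction, and verifying that the cross $z_1$-terms in the source genuinely collapse to $-\overline{z_2 s\,p_t}$. Both are secured by design through the consistent flux pair and the modified coefficient $\tilde d(r)$.
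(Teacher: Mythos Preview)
Your proposal is correct and follows essentially the same approach as the paper: subtract \eqref{sc1d3} from \eqref{sc1d1} using a common test function, exploit the consistency of the flux pair to rewrite everything in terms of $r_2=\Phi-r$, $c_2=1-c$, $\tilde c_2=1-\tilde c$, and $z_2$, and then recycle the positivity lemmas (Lemmas \ref{lemma1dc}--\ref{lemma1db1}) to conclude $\bar r_2^{n+1}>0$. Your write-up actually spells out more carefully than the paper does the sign-tracking for $[c]=-[c_2]$ and $c_x=-(c_2)_x$, as well as the source cancellation enabled by $\tilde d(r)=z_1r+z_2(\Phi-r)$, but the underlying argument is the same.
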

\begin{proof}
Since the fluxes $\widehat{uc}$ and $\hat{u}$ are consistent, then $\widehat{uc}+\widehat{uc_2}=\hat{u}$, where $c_2=1-c.$ Take $\xi=\zeta$ in \eqref{sc1d1}, and subtract \eqref{sc1d3} from \eqref{sc1d1} to obtain
{\setlength\arraycolsep{2pt}\begin{eqnarray}
({r_2}_t,\zeta)&=&\left(uc_2-D(u){c_2}_x,\zeta_x\right)+\sum_{j=1}^{N-1}\widehat{uc_2}[\zeta]_{j+\frac12}-\sum_{j=1}^{N-1}\{D(u){c_2}_x\}[\zeta]_{j+\frac12}\nonumber\\
&&-\sum_{j=1}^{N-1}\{D(u)\zeta_x\}[c_2]_{j+\frac12}-\sum_{j=1}^{N-1}\frac{\tilde{\alpha}}{h}[c_2][\zeta]_{j+\frac12}+(\tilde{c}_2q-r_2z_2p_t,\zeta),\label{sc1d5}
\end{eqnarray}}
where $r_2=\Phi-r$ and $\tilde{c_2}=1-\tilde{c}.$ It is easy to check that \eqref{sc1d5} is exactly \eqref{sc1d3} with $r$, $c$ and $z_1$ replaced by $r_2,$ $c_2$ and $z_2$, respectively. Following the same analyses in Theorem \ref{thm1d1}, we can show that $\bar{r}^{n+1}_2>0$ under the conditions given in this theorem, which further implies $\bar{r}^{n+1}<\bar{\Phi}.$
\end{proof}
With Theorems \ref{thm1d2}, the numerical cell average $\bar{r}^n$ we constructed is between $0$ and $\bar{\Phi}$. However, the numerical approximation $r^n$ may be negative or larger than $\Phi$. Therefore, we also need to apply some limiter to $r^n$ and the procedure is given below. For simplicity, we drop the superscript $n$ in all the numerical approximations.
\begin{enumerate}
\item Set up a small number $\epsilon=10^{-13}.$
\item If $\bar{r}\leq\epsilon$, take $r=\bar{r}$. If $\bar{r}_2\leq\epsilon$, take $r=\Phi-\bar{r}_2$. Then skip the following steps.
\item\label{enumerate} Define $m_j=\min\{r_{j-\frac12}^+,r_{j+\frac12}^-\}$. If $m_j\geq0$ then proceed to the next step. Otherwise, without loss of generality, we assume $r_{j-\frac12}^+<0$, then define
$$
\tilde{r}_{j-\frac12}^+=\epsilon,\quad\tilde{r}_{j+\frac12}^-=r_{j+\frac12}^--\epsilon+r_{j-\frac12}^+
$$
and use $\tilde{r}_{j-\frac12}^+$ and $\tilde{r}_{j+\frac12}^-$ to construct a new approximation, also denoted as $r$.
\item Apply the previous step for $r_2=\Phi-r$ and construct a new approximation $r$.
\end{enumerate}
\begin{remark}
The above algorithm keeps the second-order accuracy. In step \ref{enumerate}, we assume $r_{j-\frac12}^+<0$ and $0\leq\Phi_{j-\frac12}-r_{j-\frac12}^+\leq Ch^2$, then it is very easy to check $0\leq\Phi_{j-\frac12}-\tilde{r}_{j-\frac12}^+\leq Ch^2$.
\end{remark}

\begin{remark}
For an $n$-component fluid $(n\geq3)$, we denote the concentration of the $i$'th component to be $c_i$, and let $r_i=\phi c_i.$ Follow the same analysis in Theorem \ref{thm1d2}, we can show $\bar{r_i}\geq 0, i=1,\cdots,n$ and $\sum_{i=1}^n \bar{r}_i=\bar{\Phi}$. However, the limiter is not easy to construct. We will work on this in the future.
\end{remark}

\begin{remark}
The extension to high-order schemes is not straightforward as demonstrated in \cite{yifan}. Recently, Chen \cite{Chen2016} introduced the third-order maximum-principle preserving direct DG methods for convection-diffusion equations which can be applied to this problem. Another approach is to applied the flux limiter \cite{Xu2014,Xiong2015}. However, the accuracy was demonstrated by numerical experiments only.
\end{remark}

\subsection{High order time discretizations}
\label{sectime}
All the previous analyses are based on first-order Euler forward time discretization. We can also use strong stability preserving (SSP) high-order time discretizations to solve the ODE system ${\bf w}_t={\bf L}{\bf w}.$ More details of these time discretizations can be found in \cite{SO,time2,time1}. In this paper, we use the third-order SSP Runge-Kutta method \cite{SO}
{\setlength\arraycolsep{2pt}\begin{eqnarray}
{\bf w}^{(1)}&=&{\bf w}^n+\Delta t{\bf L}({\bf w}^n),\nonumber\\
{\bf w}^{(2)}&=&\frac34{\bf w}^n+\frac14\left({\bf w}^{(1)}+\Delta
t{\bf L}({\bf w}^{(1)})\right),\\
{\bf w}^{n+1}&=&\frac13{\bf w}^n+\frac23\left({\bf w}^{(2)}+\Delta
t{\bf L}({\bf w}^{(2)})\right),\nonumber
\end{eqnarray}}
and the third order SSP multi-step method \cite{time2}
\begin{equation}
{\bf w}^{n+1}=\frac{16}{27}\left({\bf w}^n+3\Delta t{\bf L}({\bf
w}^n)\right)+\frac{11}{27}\left({\bf w}^{n-3}+\frac{12}{11}\Delta
t{\bf L}({\bf w}^{n-3})\right).
\end{equation}
Since an SSP time discretization is a convex combination of Euler forward and the algorithm is to construct positive $r$ and $\Phi-r$, hence by using the limiter designed in section \ref{secbb1d}, the numerical solution obtained from the full scheme is also physically relevant.

\begin{remark}
We can also applied the second-order explicit SSP Runge-Kutta methods \cite{SO}. For implicit time integration, it is not easy to prove that the numerical cell averages are physically relevant as demonstrated in Theorems \ref{thm1d1} and \ref{thm1d2}, since the fluxes are coupled together. We will continue this in the future.
\end{remark}

\subsection{Two space dimensions}
\label{secbb2d}
In this section, we apply the bound-preserving technique to the DG scheme in two space dimensions. We consider Euler-forward time discretization only, and the high-order ones were discussed in Section \ref{sectime}. For simplicity of presentation, we only discuss the techniques for cells away from $\partial\Omega$, while the boundary cells can be analyzed following the same lines. We use $o_{ij}$ for the numerical approximation $o$ in $K_{ij}$ and the cell average is $\bar{o}_{ij}$. Moreover, we use $o^n$ to represent the solution $o$ at time level $n$.

In \eqref{sc2d3}, we take $\zeta=1$ in $K_{ij}$ to obtain the equation satisfied by the cell average of $r$,
\begin{equation}\label{fluxintegral}
\bar{r}^{n+1}_{ij}=H^c(r,{\bf u},c)+H^d_x(r,{\bf u},c)+H^d_y(r,{\bf u},c)+H^s(r,\tilde{c},q,z_1p_t),
\end{equation}
where
{\setlength\arraycolsep{2pt}\begin{eqnarray*}
H^c(r,{\bf u},c)&=&\frac13\bar{r}^n_{ij}+\lambda\left(\int_{J_j}\widehat{u_1c}_{i-\frac12,j}-\widehat{u_1c}_{i+\frac12,j}dy
+\int_{I_i}\widehat{u_2c}_{i,j-\frac12}-\widehat{u_2c}_{i,j+\frac12}dx\right),\\
H^d_x(r,{\bf u},c)&=&\frac16\bar{r}^n_{ij}-\lambda\left(\int_{J_j}\left\{D_{11}c_x+D_{12}c_y\right\}_{i-\frac12,j}+\frac{\tilde{\alpha}}{|J_j|}[c]_{i-\frac12,j}\right.\\
&&\left.-\left\{D_{11}c_x+D_{12}c_y\right\}_{i+\frac12,j}-\frac{\tilde{\alpha}}{|J_j|}[c]_{i+\frac12,j}dy\right),\\
H^d_y(r,{\bf u},c)&=&\frac16\bar{r}^n_{ij}-\lambda\left(\int_{I_i}\left\{D_{21}c_x+D_{22}c_y\right\}_{i,j-\frac12}+\frac{\tilde{\alpha}}{|I_i|}[c]_{i,j-\frac12}\right.\\
&&\left.-\left\{D_{21}c_x+D_{22}c_y\right\}_{i,j+\frac12}-\frac{\tilde{\alpha}}{|I_i|}[c]_{i,j+\frac12}dy\right),\\
H^s(r,\tilde{c},q,z_1p_t)&=&\frac13\bar{r}^n_{ij}+\Delta t\overline{\tilde{c}q-rz_1p_t},\\
\end{eqnarray*}}
with $\lambda=\frac{\Delta t}{\Delta x\Delta y}$ and ${\bf u}=(u_1,u_2)^T.$
To continue, we use 2-point Gaussian quadratures to approximate the integrals given above. The Gaussian quadrature points on $I_i$ and $J_j$ are denoted by
$
\left\{x_i^1,x_i^2\right\}\quad\textrm{and}\quad\left\{y_j^1,y_j^2\right\},
$
respectively. The corresponding weights on the interval $[-\frac12,\frac12]$ are denoted as $w_1$ and $w_2$. Denote $\lambda_1=\frac{\Delta t}{\Delta x}$ and $\lambda_2=\frac{\Delta t}{\Delta y}$, then
$$
H^c(r,{\bf u},c)=\frac13\bar{r}^n_{ij}+\lambda_1\sum_{\beta=1}^2w_\beta\left[\widehat{u_1c}_{i-\frac12,j,\beta}-\widehat{u_1c}_{i+\frac12,j,\beta}\right]\
+\lambda_2\sum_{\beta=1}^2w_\beta\left[\widehat{u_2c}_{i,j-\frac12,\beta}-\widehat{u_2c}_{i,j+\frac12,\beta}\right],
$$
where $\widehat{u_1c}_{i-\frac12,j,\beta}=\widehat{u_1c}_{i-\frac12,j}(y_j^\beta)$ is a point value in the Gaussian quadrature on the edge $e=\{x_{i-\frac12}\}\times J_j$. Likewise for the other
point values. As the general treatment, we rewrite the cell average on the right hand side as
$$
\bar{r}^n_{ij}=\sum_{\beta=1}^2\frac{w_\beta}2\left(r^+_{i-\frac12,j,\beta}+r^-_{i+\frac12,j,\beta}\right)
=\sum_{\beta=1}^2\frac{w_\beta}2\left(r^+_{i,j-\frac12,\beta}+r^-_{i,j+\frac12,\beta}\right).
$$
Based on the above,
{\setlength\arraycolsep{2pt}\begin{eqnarray*}
H^c(r,{\bf u},c)&=&\sum_{\beta=1}^2w_\beta\left[\frac{\lambda_1}{6(\lambda_1+\lambda_2)}\left(r^+_{i-\frac12,j,\beta}+r^-_{i+\frac12,j,\beta}\right)
+\lambda_1\left(\widehat{u_1c}_{i-\frac12,j,\beta}-\widehat{u_1c}_{i+\frac12,j,\beta}\right)\right]\\
&+&\sum_{\beta=1}^2w_\beta\left[\frac{\lambda_2}{6(\lambda_1+\lambda_2)}\left(r^+_{i,j-\frac12,\beta}+r^-_{i,j+\frac12,\beta}\right)
+\lambda_2\left(\widehat{u_2c}_{i,j-\frac12,\beta}-\widehat{u_2c}_{i,j+\frac12,\beta}\right)\right].
\end{eqnarray*}}
Following the same proof in Lemma \ref{lemma1dc} with some minor changes, we have
\begin{lemma}\label{lemma2dc}
Suppose $r^n>0$ $(c^n>0)$ and the parameter $\alpha$ is taken to be
\begin{equation}\label{alpha}
\alpha>\max_{\arraycolsep=0pt\def\arraystretch{0.8}\begin{array}{c}\text{\footnotesize$1\leq i\leq N_x-1,$}\\\text{\footnotesize$1\leq j\leq N_y-1,$}\\\text{\footnotesize$\beta=1,2$}\end{array}}\{{u_1}_{i+\frac12,j,\beta}^+,{u_2}_{i,j+\frac12,\beta}^+,0\},
\end{equation}
then $H^c(r,{\bf u},c)>0$ under the condition
\begin{equation}\label{cc2d}
\frac{\Delta t}{\Delta x}+\frac{\Delta t}{\Delta y}\leq\frac16\min\left\{\frac{\Phi_m}{\alpha},B_1,B_2\right\}
\end{equation}
where
$$
B_1=\min_{\arraycolsep=0pt\def\arraystretch{0.8}\begin{array}{c}\text{\footnotesize$1\leq i\leq N_x-1$}\\\text{\footnotesize$1\leq j\leq N_y$}\\\text{\footnotesize$\beta=1,2$}\end{array}}\frac{\Phi_{i+\frac12,j\pm\frac12}}{\alpha-u^+_{i+\frac12,j,\beta}},
\quad
B_2=\min_{\arraycolsep=0pt\def\arraystretch{0.8}\begin{array}{c}\text{\footnotesize$1\leq i\leq N_x$}\\\text{\footnotesize$1\leq j\leq N_y-1$}\\\text{\footnotesize$\beta=1,2$}\end{array}}\frac{\Phi_{i\pm\frac12,j+\frac12}}{\alpha-u^+_{i,j+\frac12,\beta}},
\quad
\Phi_m=\min_{\arraycolsep=0pt\def\arraystretch{0.8}\begin{array}{c}\text{\footnotesize$0\leq i\leq N_x$}\\\text{\footnotesize$0\leq j\leq N_y$}\end{array}}\Phi_{i+\frac12,j+\frac12}.
$$
\end{lemma}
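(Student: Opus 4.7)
The plan is to mirror Lemma \ref{lemma1dc} one Gaussian-quadrature node at a time. Using the decomposition displayed immediately before the statement, I can write $H^c=\sum_{\beta=1}^2 w_\beta(E_{x,\beta}+E_{y,\beta})$, where
\begin{equation*}
E_{x,\beta}=\frac{\lambda_1}{6(\lambda_1+\lambda_2)}\bigl(r^+_{i-\frac12,j,\beta}+r^-_{i+\frac12,j,\beta}\bigr)+\lambda_1\bigl(\widehat{u_1c}_{i-\frac12,j,\beta}-\widehat{u_1c}_{i+\frac12,j,\beta}\bigr),
\end{equation*}
and $E_{y,\beta}$ is the symmetric analog in the $y$-direction. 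Since $w_\beta>0$, it suffices to show $E_{x,\beta},E_{y,\beta}\ge 0$ for each $\beta$.

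For $E_{x,\beta}$ the idea is to imitate the 1D rearrangement verbatim, with one complication. The 1D proof relied on $r=\Phi c$ at every cell interface, but in 2D this identity only holds at mesh corners, not at Gaussian points lying in the interior of an edge. To bridge the gap, I note that $r$ and $c$ are bilinear, so their traces on a vertical edge $\{x_{i-\frac12}\}\times J_j$ are linear in $y$ and agree with $\Phi c$ precisely at the two corners $(x_{i-\frac12},y_{j\pm\frac12})$. Writing $r^+_{i-\frac12,j,\beta}$ as the convex combination of its two corner values and using $r^+=\Phi c^+\ge 0$ there yields
\begin{equation*}
r^+_{i-\frac12,j,\beta}\ge\min\bigl\{\Phi_{i-\frac12,j-\frac12},\Phi_{i-\frac12,j+\frac12}\bigr\}\,c^+_{i-\frac12,j,\beta},
\end{equation*}
and analogously for $r^-_{i+\frac12,j,\beta}$. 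This one-sided replacement is the only genuine 2D modification and is exactly what forces the corner values of $\Phi$ to appear in $\Phi_m$, $B_1$, and $B_2$.

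After substituting these bounds into $E_{x,\beta}$, plugging in $\widehat{u_1c}=u_1^+c^+-\alpha[c]$, and collecting coefficients of the four traces $c^-_{i-\frac12,j,\beta}$, $c^+_{i-\frac12,j,\beta}$, $c^-_{i+\frac12,j,\beta}$, $c^+_{i+\frac12,j,\beta}$, the lower bound takes precisely the form handled in Lemma \ref{lemma1dc}. The four coefficients are nonnegative provided $\alpha$ dominates $u_1^+$ at every Gaussian point on a vertical edge and $\lambda_1+\lambda_2\le\tfrac16\min\{\Phi_m/\alpha,B_1\}$, while the symmetric calculation for $E_{y,\beta}$ contributes the $B_2$ bound. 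Combining the two and summing over $\beta$ with positive weights then yields $H^c>0$. The main obstacle is the $r\leftrightarrow c$ conversion at non-corner Gaussian points; once the edge-minimum bound above is established, the remainder is a routine rerun of the 1D rearrangement in Lemma \ref{lemma1dc}.
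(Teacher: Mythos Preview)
Your proposal is correct and is precisely the argument the paper has in mind: the paper sets up the decomposition $H^c=\sum_\beta w_\beta(E_{x,\beta}+E_{y,\beta})$ and then merely says ``Following the same proof in Lemma \ref{lemma1dc} with some minor changes,'' without further detail. The edge-minimum bound you isolate, $r^+_{i-\frac12,j,\beta}\ge\min\{\Phi_{i-\frac12,j-\frac12},\Phi_{i-\frac12,j+\frac12}\}\,c^+_{i-\frac12,j,\beta}$, is exactly the ``minor change'' needed to run the one-dimensional rearrangement, and it is what produces the corner values of $\Phi$ appearing in $\Phi_m$, $B_1$, and $B_2$.
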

For $H^d_x$ and $H^d_y$, the analyses are similar.
\begin{lemma}\label{lemma2dd}
Suppose $r^n>0$ $(c^n>0)$, then $H_x^d(r,{\bf u},c)>0$ under the conditions
\begin{equation}\label{alphatilde1}
\tilde{\alpha}\geq \frac{\Delta y}{2\Delta x}D_{11}^M+\sqrt{3} D_{12}^M,
\end{equation}
\begin{equation}\label{cd2dx}
D_{11}^M\Lambda_1+2(\tilde{\alpha}+D_{12}^M)\lambda\leq\frac1{12}\Phi_m,
\end{equation}
where
$$
D_{11}^M=\max_{(x,y)\in\Omega} D_{11}({\bf u})(x,y),\quad D_{12}^M=\max_{(x,y)\in\Omega} |D_{12}({\bf u})(x,y)|,\quad\Lambda_1=\frac{\Delta t}{\Delta x^2}.
$$
Similarly, we have $H_y^d(r,{\bf u},c)>0$ if
\begin{equation}\label{alphatilde2}
\tilde{\alpha}\geq \frac{\Delta x}{2\Delta y}D_{22}^M+\sqrt{3} D_{21}^M,
\end{equation}
\begin{equation}\label{cd2dy}
D_{22}^M\Lambda_2+2(\tilde{\alpha}+D_{21}^M)\lambda\leq\frac1{12}\Phi_m,
\end{equation}
where
$$
D_{22}^M=\max_{(x,y)\in\Omega} D_{22}({\bf u})(x,y),\quad D_{21}^M=\max_{(x,y)\in\Omega} |D_{21}({\bf u})(x,y)|,\quad\Lambda_2=\frac{\Delta t}{\Delta y^2}.
$$
\end{lemma}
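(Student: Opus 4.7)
The plan is to mimic the one-dimensional argument in Lemma \ref{lemma1dd}, expanding every edge integral in $H_x^d$ by the two-point Gaussian quadrature, writing the bilinear trace $c|_{K_{ij}}$ in terms of its values at the Gaussian quadrature points on the four edges of $K_{ij}$, and then identifying the coefficient of each point value and forcing it to be non-negative. The cell-average contribution $\tfrac16\bar r^n_{ij}$ plays the same positive role as $\tfrac16\Phi_{j\pm 1/2}c^{\pm}_{j\pm 1/2}$ did in the one-dimensional proof, and will be redistributed as
\[
\tfrac16\bar r^n_{ij}=\sum_{\beta=1}^{2}\tfrac{w_\beta}{12}\bigl(r^+_{i-\frac12,j,\beta}+r^-_{i+\frac12,j,\beta}\bigr)
=\sum_{\beta=1}^{2}\tfrac{w_\beta}{12}\bigl(\Phi_{i-\frac12,j,\beta}\,c^+_{i-\frac12,j,\beta}+\Phi_{i+\frac12,j,\beta}\,c^-_{i+\frac12,j,\beta}\bigr),
\]
so that after the redistribution every quadrature point on a vertical edge carries its own positive ``mass'' of size $\Phi_m\,w_\beta/12$ against which the diffusive and penalty fluxes must be balanced.

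First I would use the bilinearity of $c$ on each cell to express the traces of $c_x$ and $c_y$ at a Gaussian point $(x_{i-\frac12},y_j^\beta)$ as finite differences: $c_x$ becomes $(c(x_{i+\frac12},y_j^\beta)-c(x_{i-\frac12},y_j^\beta))/\Delta x$ inside $K_{ij}$, and analogously from $K_{i-1,j}$, while $c_y$ at the same point equals $\bigl(c(x_{i-\frac12},y_{j+\frac12})-c(x_{i-\frac12},y_{j-\frac12})\bigr)/\Delta y$, which in turn can be re-expressed on the Gaussian abscissae $y_j^\beta=y_j\pm\Delta y/(2\sqrt3)$. This representation produces the characteristic factor $\sqrt3$ in front of $D_{12}^M$ in \eqref{alphatilde1}, because $c_y$ at a quadrature point of a vertical edge is amplified by $\sqrt3$ when expressed through the Gaussian trace values on the same edge.

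Next I would collect the coefficients. For each Gaussian point on a vertical edge there are three kinds of contributions: the positive cell-average mass $\tfrac{w_\beta}{12}\Phi_{...}$; the $D_{11}$ diffusive flux and interior penalty $\tilde\alpha/|J_j|=\tilde\alpha/\Delta y$ (whose sum of coefficients mirrors the $\tfrac16\Phi-\tfrac12\Lambda D^+ -\Lambda\tilde\alpha$ pattern of Lemma~\ref{lemma1dd}, now scaled by $w_\beta\Delta y$ from the edge integration, which yields $\Lambda_1=\Delta t/\Delta x^2$); and the $D_{12}c_y$ cross-flux, which couples to the values of $c$ at four other Gaussian points lying above and below on the same vertical edge. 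Balancing the cross-flux coefficients against the local positive mass gives exactly the two conditions \eqref{alphatilde1}–\eqref{cd2dx}: \eqref{alphatilde1} ensures positivity of the two ``same-edge'' coefficients (whose competing $D_{11}$ jump part needs $\tilde\alpha\geq\tfrac{\Delta y}{2\Delta x}D_{11}^M$ after the $w_\beta\Delta y$ rescaling, and whose cross-term needs an extra $\sqrt3\,D_{12}^M$), while \eqref{cd2dx} bounds the total drain on the positive mass by the $D_{11}$ flux and by the $D_{12}$ coupling to the neighbouring quadrature points.

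The routine part will be the bookkeeping; the main obstacle is the handling of the off-diagonal term $D_{12}c_y$. Unlike the one-dimensional setting, $c_y$ at a Gaussian point on a vertical edge is not a linear combination of the $c$-values on that edge alone once it is written in Gauss-point form, and in the neighbouring cell $K_{i-1,j}$ the same point has independent $y$-derivative data. One must therefore either dominate $D_{12}c_y$ pointwise by $D_{12}^M$ times the (absolute) finite differences used above and then redistribute the resulting coefficients across the two Gaussian nodes, which is where the $\sqrt3$ factor appears; or, more cleanly, invoke the positive semidefiniteness of ${\bf D}$ to absorb the cross-coupling into the $D_{11}$ and penalty terms. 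The statement of \eqref{alphatilde1} suggests the first route, so I would use an $L^\infty$ bound on $D_{12}$ rather than the sharper $|D_{12}|^2\leq D_{11}D_{22}$. The argument for $H_y^d$ is identical after swapping the roles of $x$ and $y$, which gives \eqref{alphatilde2}–\eqref{cd2dy}.
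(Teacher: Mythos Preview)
Your quadrature expansion and the identification of the $\sqrt3$ factor in $c_y$ are on target, and the treatment of the ``outside'' coefficients (those multiplying $c^-_{i-\frac12,\beta}$ and $c^+_{i+\frac12,\beta}$) does give exactly \eqref{alphatilde1}. But the redistribution of $\tfrac16\bar r^n_{ij}$ contains a concrete error: the second equality in your display asserts $r^+_{i-\frac12,j,\beta}=\Phi_{i-\frac12,j,\beta}\,c^+_{i-\frac12,j,\beta}$, and this is false. By definition $c=\mathcal I_1(r/\Phi)$, so $r=\Phi c$ only at the four \emph{corners} of $K_{ij}$; at a Gaussian point on an edge $\Phi c$ is quadratic in $y$ while $r$ is linear, and the two do not agree. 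One can salvage the inequality $r^+_{i-\frac12,j,\beta}\ge\Phi_m\,c^+_{i-\frac12,j,\beta}$ from positivity of the corner values of $c$, but that is not what you wrote, and it already shows your bookkeeping cannot reproduce the local $\Phi$-values that appear in the paper's estimate.

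This is precisely why the paper does not stay in the Gaussian-point basis for the inside coefficients. After isolating the outside $\tau$'s, it converts the remaining Gaussian-point values $c^+_{i-\frac12,\beta}$, $c^-_{i+\frac12,\beta}$ back to corner values via $c_{i\pm\frac12,\beta}=\mu_\beta\, c_{i\pm\frac12,j-\frac12}+\mu_{3-\beta}\, c_{i\pm\frac12,j+\frac12}$, and simultaneously writes $\bar r^n_{ij}$ as the arithmetic mean of the four corner products $(c\Phi)_{i\pm\frac12,j\pm\frac12}$. Two things are gained. First, $r=\Phi c$ is now exact, so each corner value of $c$ carries the precise positive mass $\tfrac1{12}\Phi_{i\pm\frac12,j\pm\frac12}$. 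Second, the $D_{12}$ contributions from $\tau^+_{i-\frac12,1}$ and $\tau^+_{i-\frac12,2}$ have \emph{opposite} signs and combine at each corner with weights whose difference is $w_1\mu_1-w_2\mu_2=1/(2\sqrt3)$; this kills the $\sqrt3$ coming from the $c_y$ finite difference and is what makes the $D_{12}^M$ term in \eqref{cd2dx} appear without a $\sqrt3$. Your Gaussian-point bookkeeping sees no such cancellation: each $\tau^+_{i-\frac12,\beta}$ must be bounded on its own, leaving $2\sqrt3\,D_{12}^M\lambda$ against $\Phi_m/6$. That is a valid sufficient condition of the same shape, but it is not the one stated in the lemma, so your claim of recovering ``exactly'' \eqref{cd2dx} is not right.
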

\begin{proof}
We only prove for $H^d_x$. Moreover, we simplify the subscript of the point value in a Gaussian quadrature, and denote $c^+_{i-\frac12,\beta}$ for $c^+_{i-\frac12,j,\beta}.$ Likewise for the others. Notice the fact that, $c$ is linear along each axis. Therefore,
$$
{c_x}_{i-\frac12,\beta}^-=\frac1{\Delta x}\left(c_{i-\frac12,\beta}^--c_{i-\frac32,\beta}^+\right),\quad
{c_x}_{i-\frac12,\beta}^+=\frac1{\Delta x}\left(c_{i+\frac12,\beta}^--c_{i-\frac12,\beta}^+\right),
$$
$$
{c_y}_{i-\frac12,1}^-=\frac{\sqrt{3}}{\Delta y}\left(c_{i-\frac12,2}^--c_{i-\frac12,1}^-\right)={c_y}_{i-\frac12,2}^-,\quad
{c_y}_{i-\frac12,1}^+=\frac{\sqrt{3}}{\Delta y}\left(c_{i-\frac12,2}^+-c_{i-\frac12,1}^+\right)={c_y}_{i-\frac12,2}^+.
$$
We also use 2-point Gaussian quadratures to approximate the integral, then
{\setlength\arraycolsep{2pt}\begin{eqnarray*}
H^d_x(r,u,c)&=&\frac16\bar{r}^n_{ij}-\frac{\lambda_1}2\sum_{\beta=1}^2w_\beta\left(\frac{{D_{11}}^-_{i-\frac12,\beta}}{\Delta x}\left(c_{i-\frac12,\beta}^--c_{i-\frac32,\beta}^+\right)+\frac{{D_{11}}^+_{i-\frac12,\beta}}{\Delta x}\left(c_{i+\frac12,\beta}^--c_{i-\frac12,\beta}^+\right)\right)\\
&&-\frac{\sqrt{3}\lambda_1}2\sum_{\beta=1}^2w_\beta\left(\frac{{D_{12}}^-_{i-\frac12,\beta}}{\Delta y}\left(c_{i-\frac12,2}^--c_{i-\frac12,1}^-\right)+\frac{{D_{12}}^+_{i-\frac12,\beta}}{\Delta y}\left(c_{i-\frac12,2}^+-c_{i-\frac12,1}^+\right)\right)\\
&&+\frac{\lambda_1}2\sum_{\beta=1}^2w_\beta\left(\frac{{D_{11}}^-_{i+\frac12,\beta}}{\Delta x}\left(c_{i+\frac12,\beta}^--c_{i-\frac12,\beta}^+\right)+\frac{{D_{11}}^+_{i+\frac12,\beta}}{\Delta x}\left(c_{i+\frac32,\beta}^--c_{i+\frac12,\beta}^+\right)\right)\\
&&+\frac{\sqrt{3}\lambda_1}2\sum_{\beta=1}^2w_\beta\left(\frac{{D_{12}}^-_{i+\frac12,\beta}}{\Delta y}\left(c_{i+\frac12,2}^--c_{i+\frac12,1}^-\right)+\frac{{D_{12}}^+_{i+\frac12,\beta}}{\Delta y}\left(c_{i+\frac12,2}^+-c_{i+\frac12,1}^+\right)\right)\\
&&-\lambda_1\sum_{\beta=1}^2w_\beta\frac{\tilde{\alpha}}{\Delta y}\left(c^+_{i-\frac12,\beta}-c^-_{i-\frac12,\beta}\right)
+\lambda_1\sum_{\beta=1}^2w_\beta\frac{\tilde{\alpha}}{\Delta y}\left(c^+_{i+\frac12,\beta}-c^-_{i+\frac12,\beta}\right)\\
&=&\frac16\bar{r}^n_{ij}+\Lambda_1\sum_{\beta=1}^2\frac{w_\beta}2{D_{11}}_{i-\frac12,\beta}^-c_{i-\frac32,\beta}^+
+\Lambda_1\sum_{\beta=1}^2\frac{w_\beta}2{D_{11}}^+_{i+\frac12,\beta}c_{i+\frac32,\beta}^-\\
&&+\sum_{\beta=1}^2\frac{w_\beta}2\left(\tau_{i-\frac12,\beta}^-c_{i-\frac12,\beta}^-+\tau_{i-\frac12,\beta}^+c_{i-\frac12,\beta}^+
+\tau_{i+\frac12,\beta}^-c_{i+\frac12,\beta}^-+\tau_{i+\frac12,\beta}^+c_{i+\frac12,\beta}^+\right),
\end{eqnarray*}}
where
{\setlength\arraycolsep{2pt}\begin{eqnarray*}
\tau_{i\pm\frac12,1}^\pm&=&-\Lambda_1{D_{11}}^\pm_{i\pm\frac12,1}\mp\sqrt{3}\lambda{D_{12}}^\pm_{i\pm\frac12,1}\mp\sqrt{3}\lambda{D_{12}}^\pm_{i\pm\frac12,2}+2\lambda\tilde{\alpha},\\
\tau_{i\pm\frac12,2}^\pm&=&-\Lambda_1{D_{11}}^\pm_{i\pm\frac12,2}\pm\sqrt{3}\lambda{D_{12}}^\pm_{i\pm\frac12,1}\pm\sqrt{3}\lambda{D_{12}}^\pm_{i\pm\frac12,2}+2\lambda\tilde{\alpha},\\
\tau_{i\mp\frac12,1}^\pm&=&\pm\Lambda_1{D_{11}}^+_{i-\frac12,1}\pm\sqrt{3}\lambda{D_{12}}^\pm_{i\mp\frac12,1}\pm\sqrt{3}\lambda{D_{12}}^\pm_{i\mp\frac12,2}\mp\Lambda_1{D_{11}}^-_{i+\frac12,1}-2\lambda\tilde{\alpha},\\
\tau_{i\mp\frac12,2}^\pm&=&\pm\Lambda_1{D_{11}}^+_{i-\frac12,2}\mp\sqrt{3}\lambda{D_{12}}^\pm_{i\mp\frac12,1}\mp\sqrt{3}\lambda{D_{12}}^\pm_{i\mp\frac12,2}\mp\Lambda_1{D_{11}}^-_{i+\frac12,2}-2\lambda\tilde{\alpha}.
\end{eqnarray*}}
Since $D_{11}>0$, then we only need to consider the terms on the second line in the last step. Firstly, if we take
$$
\tilde{\alpha}\geq \frac{\Delta y}{2\Delta x}D_{11}^M+\sqrt{3} D_{12}^M,
$$
where
$$
D_{11}^M=\max_{(x,y)\in\Omega} D_{11}({\bf u})(x,y),\quad D_{12}^M=\max_{(x,y)\in\Omega} |D_{12}({\bf u})(x,y)|.
$$
then $\tau_{i-\frac12,\beta}^->0$ and $\tau_{i+\frac12,\beta}^+>0$. For all the other $\tau's$, they are related to the point values of $c$ in cell $K_{ij}$. It is easy to see
$$
c_{i\pm\frac12,1}=\mu_1c_{i\pm\frac12,j-\frac12}+\mu_2c_{i\pm\frac12,j+\frac12},\quad
c_{i\pm\frac12,2}=\mu_2c_{i\pm\frac12,j-\frac12}+\mu_1c_{i\pm\frac12,j+\frac12},
$$
where $\mu_1=\frac{3+\sqrt{3}}{6}$, $\mu_2=\frac{3-\sqrt{3}}{6}$ and the point values of $c$ mentioned above are all the ones chosen in cell $K_{ij}$. In this section, if not otherwise stated, this is the default definition of element in $W_h$ along the traces in $K_{ij}$. With the definition above, we can write
$$
\bar{r}^n_{ij}=\frac{(c\Phi)_{i-\frac12,j-\frac12}+(c\Phi)_{i-\frac12,j+\frac12}+(c\Phi)_{i+\frac12,j-\frac12}+(c\Phi)_{i+\frac12,j+\frac12}}{4},
$$
which further yields
{\setlength\arraycolsep{2pt}\begin{eqnarray*}
&&\frac16\bar{r}^n_{ij}+\sum_{\beta=1}^2\frac{w_\beta}2\tau_{i-\frac12,\beta}^+c_{i-\frac12,\beta}^++\sum_{\beta=1}^2\frac{w_\beta}2\tau_{i+\frac12,\beta}^-c_{i+\frac12,\beta}^-\\
&=&\frac12A_1c_{i-\frac12,j-\frac12}+\frac12A_2c_{i-\frac12,j+\frac12}+\frac12A_3c_{i+\frac12,j-\frac12}+\frac12A_4c_{i+\frac12,j+\frac12},
\end{eqnarray*}}
where
{\setlength\arraycolsep{2pt}\begin{eqnarray*}
A_1=\frac1{12}\Phi_{i-\frac12,j-\frac12}+w_1\mu_1\tau_{i-\frac12,1}^++w_2\mu_2\tau_{i-\frac12,2}^+,&\quad&
A_2=\frac1{12}\Phi_{i-\frac12,j+\frac12}+w_1\mu_2\tau_{i-\frac12,1}^++w_2\mu_1\tau_{i-\frac12,2}^+,\\
A_3=\frac1{12}\Phi_{i+\frac12,j-\frac12}+w_1\mu_1\tau_{i+\frac12,1}^-+w_2\mu_2\tau_{i+\frac12,2}^-,&\quad&
A_4=\frac1{12}\Phi_{i+\frac12,j+\frac12}+w_1\mu_2\tau_{i+\frac12,1}^-+w_2\mu_1\tau_{i+\frac12,2}^-.
\end{eqnarray*}}
We want all the $A_i's$ to be positive, and only prove for $A_1$, since the other three can be analyzed by the same lines. It is easy to check that
{\setlength\arraycolsep{2pt}\begin{eqnarray*}
A_1&=&\frac1{12}\Phi_{i-\frac12,j-\frac12}+\sum_{\beta=1}^2w_\beta\mu_\beta\left(\Lambda_1{D_{11}}^+_{i-\frac12,\beta}-\Lambda_1{D_{11}}^-_{i+\frac12,\beta}-2\lambda\tilde{\alpha}\right)\\
&&+(w_1\mu_1-w_2\mu_2)\sqrt{3}\lambda\left({D_{12}}^+_{i-\frac12,1}+{D_{12}}^+_{i-\frac12,2}\right)\\
&\geq&\frac1{12}\Phi_{i-\frac12,j-\frac12}-2\lambda\tilde{\alpha}-D^M_{11}\Lambda_1-2D_{12}^M\lambda.
\end{eqnarray*}}
Therefore, by taking
$$
D_{11}^M\Lambda_1+2(\tilde{\alpha}+D_{12}^M)\lambda\leq\frac1{12}\Phi_{i-\frac12,j-\frac12},
$$
we have
$A_1>0.$
\end{proof}
For $H^s$, the result is similar to Lemma \ref{lemma1ds}, so we skip the proof and state the result below.
\begin{lemma}\label{lemma2ds}
Suppose $c^n>0$ and $r^n>0$ then $H^s(r,{\bf u},c)>0$ under the condition
\begin{equation}\label{cs2d}
\Delta t\leq\frac{1}{6z_1p_M},\quad\textrm{and}\quad
\Delta t\leq\min_{i,j}\frac{\Phi^m_{ij}}
{6\max_{\beta,\gamma}\left\{-q(x^\beta_i,y^\gamma_j),0\right\}},
\end{equation}
where
\begin{equation}\label{pt2d}
p_M=\max_{\beta,\gamma=1,2}p_t(x_i^\beta,y_j^\gamma),\quad\Phi^m_{ij}=\min\{\Phi_{i-\frac12,j-\frac12},
\Phi_{i-\frac12,j+\frac12},\Phi_{i+\frac12,j-\frac12},\Phi_{i+\frac12,j+\frac12}\}.
\end{equation}
\end{lemma}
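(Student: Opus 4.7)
The plan is to mimic the proof of Lemma \ref{lemma1ds}, replacing the one-dimensional two-point Gauss quadrature by its tensor-product two-dimensional analogue on $K_{ij}$. Writing the Gauss nodes as $(x_i^\beta,y_j^\gamma)$ with $\beta,\gamma\in\{1,2\}$ and product weights $w_\beta w_\gamma$, I would approximate both the cell average $\bar r^n_{ij}$ and the source integral $\Delta t\,\overline{\tilde cq-rz_1p_t}$ by this quadrature, then split $\tfrac13=\tfrac16+\tfrac16$ to obtain
$$H^s(r,\tilde c,q,z_1p_t)=\sum_{\beta,\gamma=1}^2 w_\beta w_\gamma\bigl(L^1_{\beta\gamma}+L^2_{\beta\gamma}\bigr),$$
where $L^1_{\beta\gamma}=\bigl(\tfrac16-\Delta tz_1p_t(x_i^\beta,y_j^\gamma)\bigr)r(x_i^\beta,y_j^\gamma)$ and $L^2_{\beta\gamma}=\tfrac16 r(x_i^\beta,y_j^\gamma)+\Delta t\,\tilde c(x_i^\beta,y_j^\gamma)q(x_i^\beta,y_j^\gamma)$. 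It then suffices to prove $L^1_{\beta\gamma}\ge 0$ and $L^2_{\beta\gamma}\ge 0$ for every $(\beta,\gamma)$.

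For $L^1_{\beta\gamma}$, the first bound in \eqref{cs2d} together with the pointwise nonnegativity of $r$ at the interior Gauss nodes does the job; the latter follows because $r\in W_h$ is bilinear on $K_{ij}$ and hence a convex combination of its four corner values $r_{i\pm\frac12,j\pm\frac12}=\Phi_{i\pm\frac12,j\pm\frac12}\,c_{i\pm\frac12,j\pm\frac12}\ge 0$, the identity at the corners being forced by the definition $c=\mathcal I_1\{r/\Phi\}$.

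For $L^2_{\beta\gamma}$, the case $q(x_i^\beta,y_j^\gamma)\ge 0$ is immediate. When $q(x_i^\beta,y_j^\gamma)<0$ one has $\tilde c=c$, and I would expand both $r$ and $c$ in the $Q^1$ nodal basis on $K_{ij}$. With the 1D weights $\mu_1=(3+\sqrt 3)/6$ and $\mu_2=(3-\sqrt 3)/6$ borrowed from Lemma \ref{lemma1ds}, let $\nu^{\beta\gamma}_{k\ell}$ denote the tensor-product weight associated to the corner $(x_{i+k},y_{j+\ell})$ with $k,\ell\in\{-\frac12,\frac12\}$; these are nonnegative and sum to one. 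Then
$$r(x_i^\beta,y_j^\gamma)=\sum_{k,\ell}\nu^{\beta\gamma}_{k\ell}\,r_{i+k,j+\ell},\qquad c(x_i^\beta,y_j^\gamma)=\sum_{k,\ell}\nu^{\beta\gamma}_{k\ell}\,c_{i+k,j+\ell},$$
and substituting $r_{i+k,j+\ell}=\Phi_{i+k,j+\ell}\,c_{i+k,j+\ell}$ gives
$$L^2_{\beta\gamma}=\sum_{k,\ell}\nu^{\beta\gamma}_{k\ell}\Bigl(\tfrac16\Phi_{i+k,j+\ell}+\Delta t\,q(x_i^\beta,y_j^\gamma)\Bigr)c_{i+k,j+\ell}.$$
The second bound in \eqref{cs2d}, combined with $c\ge 0$ at the corners, makes each parenthesis nonnegative and closes the estimate.

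The main obstacle is the bookkeeping in the step above: one must observe that $r$ and $c$ are expanded with identical weights $\nu^{\beta\gamma}_{k\ell}$, so that after invoking $r=\Phi c$ at the four corners of $K_{ij}$ the factor $c_{i+k,j+\ell}$ can be pulled out cleanly and the positivity condition transferred onto $\Phi^m_{ij}$. This identification rests on the fact that $c=\mathcal I_1\{r/\Phi\}$ forces $r=\Phi c$ only at the corners (not at the Gauss points themselves), but since both functions are $Q^1$ on $K_{ij}$ the same bilinear basis is used for both. Once this is recognized the remaining arithmetic is a direct transcription of the 1D argument.
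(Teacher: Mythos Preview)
Your proposal is correct and is precisely the tensor-product extension of the proof of Lemma~\ref{lemma1ds} that the paper has in mind; indeed, the paper omits the proof of Lemma~\ref{lemma2ds} entirely, simply noting that ``the result is similar to Lemma~\ref{lemma1ds}, so we skip the proof.'' Your careful observation that $r=\Phi c$ holds only at the four corners (by the definition $c=\mathcal I_1\{r/\Phi\}$), together with the fact that both $r$ and $c$ lie in $Q^1(K_{ij})$ and hence share the same bilinear nodal expansion with nonnegative tensor-product weights $\nu^{\beta\gamma}_{k\ell}$, is exactly the right way to transfer the 1D argument to 2D.
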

Based on the above three lemmas, we can state the following theorem.
\begin{theorem}\label{thm2d1}
Suppose $r^n>0$ $(c^n>0)$, and the parameters $\alpha$ and $\tilde{\alpha}$ satisfy \eqref{alpha} and \eqref{alphatilde1}, \eqref{alphatilde2}, respectively. Then $\bar{r}^{n+1}_j>0$ under the conditions \eqref{cc2d}, \eqref{cd2dx}, \eqref{cd2dy} and \eqref{cs2d}.
\end{theorem}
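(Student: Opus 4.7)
The plan is to observe that Theorem \ref{thm2d1} is essentially a direct assembly of the three preceding lemmas. Starting from the identity \eqref{fluxintegral}, which was derived by taking $\zeta=1$ in \eqref{sc2d3} and splitting the cell-average update into the convective, $x$-diffusion, $y$-diffusion, and source contributions,
$$
\bar{r}^{n+1}_{ij}=H^c(r,{\bf u},c)+H^d_x(r,{\bf u},c)+H^d_y(r,{\bf u},c)+H^s(r,\tilde{c},q,z_1p_t),
$$
I would argue that it suffices to prove each of the four summands is strictly positive; strict positivity of their sum then follows immediately.

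The argument for each summand is already available. For $H^c$, I would invoke Lemma \ref{lemma2dc}: choosing $\alpha$ large enough so that \eqref{alpha} holds converts the coefficients multiplying each traces $r^\pm_{i\pm\frac12,j,\beta}$ and $r^\pm_{i,j\pm\frac12,\beta}$ in the Gaussian quadrature representation of $H^c$ into nonnegative combinations of upwind values and jump penalties, and then the CFL-type bound \eqref{cc2d} guarantees these coefficients are strictly positive, so $H^c>0$ because $r^n>0$ (equivalently $c^n>0$). For $H^d_x$ and $H^d_y$, I would apply the two halves of Lemma \ref{lemma2dd}: the penalty lower bounds \eqref{alphatilde1} and \eqref{alphatilde2} control the cross-diffusion $D_{12}$ and $D_{21}$ terms, and the parabolic CFL bounds \eqref{cd2dx}, \eqref{cd2dy} ensure the diagonal diffusion contributions do not overwhelm the $\frac16\bar{r}^n_{ij}$ buffer, giving $H^d_x,H^d_y>0$. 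For $H^s$, I would apply Lemma \ref{lemma2ds}, whose condition \eqref{cs2d} handles both the compressibility sink $-rz_1p_t$ and the sink part of $\tilde{c}q$ where $q<0$, yielding $H^s>0$.

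Summing the four strict inequalities on the right-hand side of \eqref{fluxintegral} gives $\bar{r}^{n+1}_{ij}>0$ for every interior cell $K_{ij}$. The boundary-cell variants are entirely analogous, requiring only the modifications mentioned in the body of the paper (matching the one-dimensional boundary treatment in Lemma \ref{lemma1db1}, with $\hat{\bf u}=0$ on $\partial\Omega$ dropping the relevant convective flux terms). Since the three CFL-style conditions \eqref{cc2d}, \eqref{cd2dx}, \eqref{cd2dy} and the source condition \eqref{cs2d} are imposed simultaneously in the hypothesis, the single $\Delta t$ is small enough for all four lemmas to apply at once, completing the proof.

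There is no real technical obstacle at this stage; all the hard work—the algebraic rewriting of fluxes and diffusive terms as convex combinations of trace values, and the identification of the sharp coefficients—was carried out in Lemmas \ref{lemma2dc}, \ref{lemma2dd}, and \ref{lemma2ds}. The only delicate bookkeeping is making sure the three distinct CFL conditions are compatible, which is automatic since each bounds $\Delta t$ from above, so the effective time step is just the minimum. Consequently, the proof I would write is short and essentially says: ``By \eqref{fluxintegral} together with Lemmas \ref{lemma2dc}, \ref{lemma2dd}, and \ref{lemma2ds}, each of $H^c$, $H^d_x$, $H^d_y$, and $H^s$ is positive under the stated hypotheses, hence so is $\bar{r}^{n+1}_{ij}$.''
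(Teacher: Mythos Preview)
Your proposal is correct and matches the paper's own treatment: the paper does not give a separate proof for Theorem~\ref{thm2d1} but simply notes that it follows from the decomposition \eqref{fluxintegral} together with Lemmas~\ref{lemma2dc}, \ref{lemma2dd}, and \ref{lemma2ds}, exactly as you describe. Your additional remarks on boundary cells and compatibility of the CFL conditions are accurate and mirror the one-dimensional argument in Theorem~\ref{thm1d1}.
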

Following the same proof in Theorem \ref{thm1d2} with some minor changes, we have the following one.
\begin{theorem}\label{thm2d2}
Suppose $0\leq r^n\leq \Phi$ and the conditions in Theorem \ref{thm2d1} are satisfied. Moreover, if the fluxes $\widehat{{\bf u}c}$ and $\hat{\bf u}$ are consistent, then $\bar{r}^{n+1}\leq\bar{\Phi},$ under the condition
\begin{equation}\label{cs22d}
\Delta t\leq\frac{1}{6z_2p_M}
\end{equation}
where $p_M$ was given in \eqref{pt2d}.
\end{theorem}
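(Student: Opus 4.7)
The plan is to mirror the one-dimensional argument of Theorem \ref{thm1d2}: introduce the complementary variables $r_2 = \Phi - r$, $c_2 = 1-c$, and $\tilde{c}_2 = 1-\tilde{c}$, and show that $r_2$ solves exactly the same DG system that $r$ does, but with $(r,c,\tilde{c},z_1)$ replaced by $(r_2,c_2,\tilde{c}_2,z_2)$. Once this structural equivalence is established, Theorem \ref{thm2d1} applied to $r_2$ immediately yields $\bar{r}_2^{n+1} > 0$, which is the desired $\bar{r}^{n+1} < \bar{\Phi}$.

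To carry this out, I would first take $\xi = \zeta$ in \eqref{sc2d1} and subtract \eqref{sc2d3} from it. On the left hand side I use $\tilde{d}(r) = z_1 r + z_2(\Phi - r)$ and move the $r z_1 p_t$ term to combine with $\tilde{d}(r) p_t$, producing $z_2 r_2 p_t$; together with $-r_t = r_{2,t}$ (since $\Phi$ is time-independent) this puts the left hand side in the required $((r_2)_t + r_2 z_2 p_t, \zeta)$ form. On the right hand side, ${\bf u} - {\bf u}c = {\bf u} c_2$ gives the volume convection, $q - \tilde{c}q = \tilde{c}_2 q$ gives the source, and $\nabla c = -\nabla c_2$ flips the sign of the volume diffusion so that after accounting for the overall subtraction the diffusion appears as $-{\bf D}\nabla c_2$, exactly as in \eqref{sc2d3}.

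Next I would verify the flux terms reassemble correctly. The consistency hypothesis gives $\widehat{{\bf u}c_2} = \hat{\bf u} - \widehat{{\bf u}c}$, and inserting \eqref{flux2d1}--\eqref{flux2d2} together with $[c_2] = -[c]$ shows $\widehat{{\bf u}c_2} = {\bf u}^+ c_2^+ - \alpha[c_2]$, structurally identical to \eqref{flux2d2}. For the interior penalty and numerical diffusion terms, $\{{\bf D}\nabla c\} = -\{{\bf D}\nabla c_2\}$ and $[c] = -[c_2]$ combine with the minus sign coming from the subtraction to reproduce the bracketed expression in \eqref{sc2d3} verbatim, with $c$ replaced by $c_2$. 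This is the step that requires the most bookkeeping and is where I expect to be most careful; the rest of the proof is essentially citation.

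Finally I would invoke Theorem \ref{thm2d1} applied to the $r_2$ equation. The hypothesis $0 \leq r^n \leq \Phi$ delivers $r_2^n \geq 0$; the parameters $\alpha$ and $\tilde{\alpha}$ satisfying \eqref{alpha}, \eqref{alphatilde1}, \eqref{alphatilde2} are unchanged because they depend only on ${\bf u}$ and ${\bf D}$; and the CFL conditions \eqref{cc2d}, \eqref{cd2dx}, \eqref{cd2dy} depend on $\Phi$ and the flux data symmetrically in $c$ and $c_2$, so they transfer without modification. The only genuinely new ingredient is the source bound from Lemma \ref{lemma2ds}: reapplied with $z_2 p_t$ in place of $z_1 p_t$, it produces the hypothesis \eqref{cs22d}. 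The condition involving $-q$ at production points transfers unchanged, since at such points $\tilde{c}_2 = c_2$ holds by the same convention that gave $\tilde{c} = c$. This completes the argument: $\bar{r}_2^{n+1} > 0$, equivalently $\bar{r}^{n+1} < \bar{\Phi}$.
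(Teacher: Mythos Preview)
Your proposal is correct and follows essentially the same approach as the paper: the paper simply states that the proof follows the one-dimensional argument of Theorem \ref{thm1d2} with minor changes, i.e., subtract \eqref{sc2d3} from \eqref{sc2d1} with $\xi=\zeta$, use the consistency of the flux pair to obtain the scheme for $r_2=\Phi-r$, $c_2=1-c$, $\tilde{c}_2=1-\tilde{c}$ with $z_2$ in place of $z_1$, and then invoke Theorem \ref{thm2d1}. Your write-up is in fact more detailed than the paper's own treatment.
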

With Theorem \ref{thm2d2}, we can construct physically relevant numerical cell averages $\bar{r}^n$. However, the numerical approximation $r^n$ may be negative or larger than $\Phi$. Therefore, we also need to apply limiters to $r^n$ and the procedure is given in the following steps. For simplicity, we drop the superscript $n$ in all the numerical approximations
\begin{enumerate}
\item Set up a small number $\epsilon=10^{-13}.$
\item If $\bar{r}<\epsilon$, we take $r=\bar{r}$. If $\bar{\Phi}-\bar{r}<\epsilon$, we take $r=\Phi-\bar{\Phi}+\bar{r}$. Then skip the following steps.
\item\label{enumerate2d} Compute $m_j=\textrm{min}_{(x,y)\in K_{ij}}r(x,y)$. If $m_j<0$, then define the four corners of $K_{ij}$ to be ${\bf x}_\ell$, $\ell=1,2,3,4$. Compute
$$
s=\frac{\bar{r}}{\sum_{\ell=1}^4(r({\bf x}_\ell)+|r({\bf x}_\ell)|)},
$$
then the values of the modified $r$ at the corners are given as
$$
\tilde{r}({\bf x}_i)=\left\{\begin{array}{cc}0,&\textrm{if}\quad r({\bf x}_i)<0,\\sr({\bf x}_i),&\textrm{if}\quad r({\bf x}_i)>0.\end{array}\right.
$$
Finally, we use $\tilde{r}({\bf x}_i)$ to construct a new approximation, also denoted as $r$.
\item Apply the previous step for $r_2=\Phi-r$ and construct a new approximation $r$.
\end{enumerate}

\begin{remark}
The algorithm above can be applied to triangular meshes, but the finite element space should be different. In general, we require $\Phi$, the projection of $\phi$, to be continuous. For rectangular meshes, there are four degrees of freedom (DOF) to be determined, and we have to use $Q^1$ polynomials. For triangular meshes, only three DOFs available, hence $P^1$ polynomials is necessary. Besides the above, the proof for triangular meshes is also different. Following \cite{yifan}, the integral of the fluxes in \eqref{fluxintegral} can be written as $\int_{\partial K}{\bf D}\nabla c\cdot {\bf n}ds=\int_{\partial K}\nabla c\cdot ({\bf Dn})ds=\int_{\partial K}\frac{\partial c}{\partial \boldsymbol\nu}ds$, where $K$ is a cell in the partition,  ${\bf n}$ is the outer normal of the boundary $\partial K$ and ${\boldsymbol\nu}={\bf Dn}$. For $P^1$ element, the directional derivative $\frac{\partial c}{\partial \boldsymbol\nu}$ is a constant along the direction $\boldsymbol\nu$ in each cell. Notice the fact that $c_x$ and $c_y$ are constants in this section. Therefore, we can follow the analysis in this paper to estimate $\frac{\partial c}{\partial \boldsymbol\nu}$ and obtain positive numerical cell averages $\bar{r}$.
\end{remark}


\section{Numerical example}
\label{secexample}
\setcounter{equation}{0}
In this section, we provide numerical examples to illustrate the accuracy and capability of the method. The time discretization is given as the third-order SSP Runge-Kutta method \cite{time1}.

\subsection{One space dimension}
In this subsection, we consider problem in one space dimension on the computational domain $[0,2\pi]$. If not otherwise stated, we choose $N=80$. In the first example, we would like to construct an analytical solution and test the accuracy of the bound-preserving DG scheme.
\begin{example}\label{accuracy}
We choose the initial condition as
$$
c(x,0)=\frac12(1-\cos(x)),\quad\quad p(x,0)=\cos(x)-1.
$$
The source parameters $q$ and $\tilde{c}$ are taken as
$$
q(x,t)=e^{-t},\quad\tilde{c}=\frac12(e^{-\gamma t}(\sin^2(x)-\cos(x))+1).
$$
Moreover, we also choose other parameters as
$$
z_1=z_2=1,\quad\phi(x)=1,\quad D(u)=\gamma,\quad \kappa(x)=\mu(c)=1.
$$
\end{example}
It is easy to see that the exact solutions are
$$
c(x,t)=\frac12(1-e^{-\gamma t}\cos(x)),\quad\quad p(x,t)=e^{-t}(\cos(x)-1).
$$
We choose $\Delta t=0.05\Delta x^2$ and compute up to $T=1$. We take $\gamma=10^{-5}$ such that the exact solution $c$ is very close to $0$ for $t>0$. Therefore, the bound-preserving limiter enact frequently. We compute the $L^\infty$-norm of the error between the numerical and exact solutions of $c$, and the results are given in Table \ref{accuracy_test}.
\begin{table}[!htb]
\centering
\begin{tabular}{c|c|c|c|c}
\hline
&\multicolumn{2}{c|}{with limiter}&\multicolumn{2}{c}{ no limiter}\\\hline
$N$ & $L^{\infty}$ error & order&$L^{\infty}$ error & order
\\\hline
20 &4.02e-3& -- &3.21e-3& -- \\
40 &1.02e-3&1.98&8.15e-4&1.98\\
80 &2.57e-4&1.99&2.07e-4&2.00\\
160&6.41e-5&2.00&5.07e-5&2.00\\
\hline
\end{tabular}
\caption{Example \ref{accuracy}: accuracy test for the second-order DG methods with and without the bound-preserving limiter.}
\label{accuracy_test}
\end{table}
From the table, we can observe second-order accuracy of the DG scheme with and without the bound-preserving limiter. Therefore, the bound-preserving technique does not kill the accuracy. This is quite different from the TVD-like schemes, which has only first order accuracy in $L^\infty$-norm. Next, we test the effect of the bound-preserving limiter and solve the following example.
\begin{example}\label{ex2}
We chose the initial condition as
$$
c(x,0)=\frac12(\cos(x)+1),\quad\quad p(x,0)=-\gamma\cos(x).
$$
Other parameters are chosen as
$$
q(x,t)=0,\ z_1=0.35,\ z_2=\mu(c)=\kappa(x)=1,\ \phi(x)=\frac14(3+\cos(x)),\ D(u)=0.
$$
\end{example}
In this example, we use $\Delta t=0.01\Delta x^2$ and compute up to $T=0.1$. Since we take $D(u)=0$, the diffusion term cannot provide any stability to the numerical scheme. In the numerical experiments, we consider $\gamma=1,2,3,4$ and apply the bound-preserving limiters. The numerical approximations of $c$ are given in the left panel of Figure \ref{ex2_figure}.
\begin{figure}[!htp]\centering
    \includegraphics[width=0.35\textwidth]{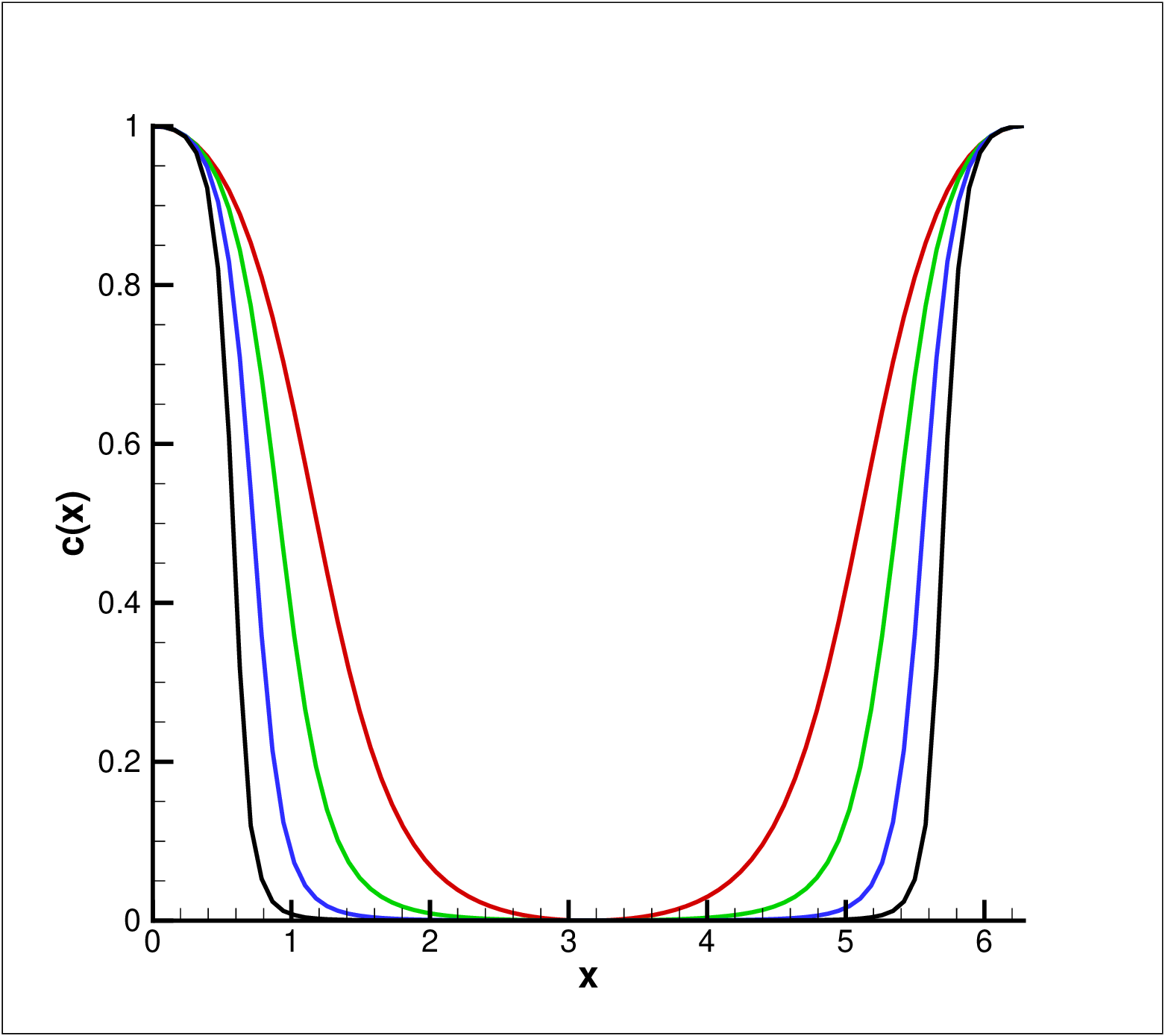}
    \includegraphics[width=0.35\textwidth]{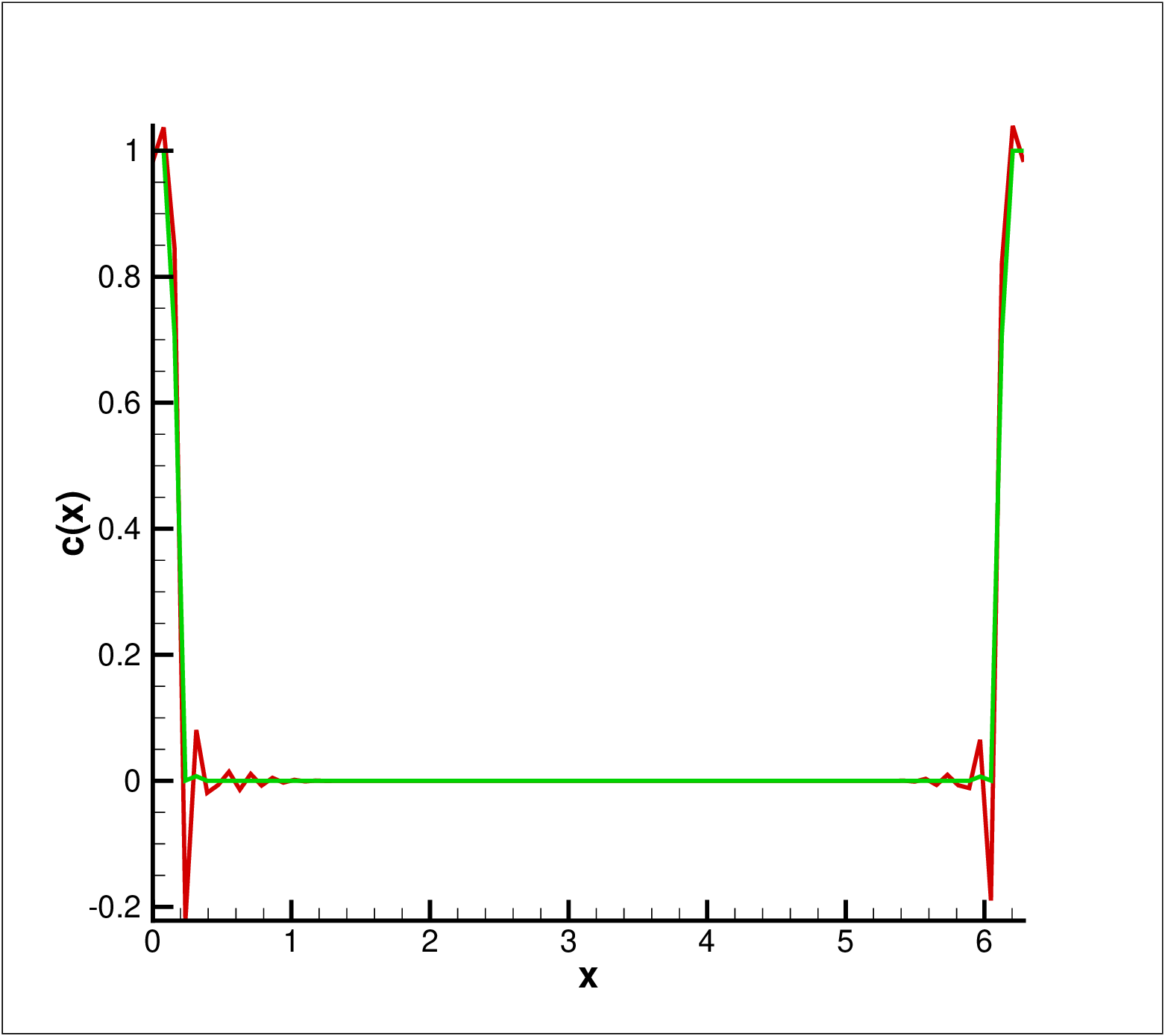}
    \caption{Example \ref{ex2}: Left panel shows numerical approximations of $c$ for $\gamma$=1 (red), 2 (green), 3 (blue), 4 (black) with bound-preserving limiter. Right panel shows numerical approximations of $c$ for $\gamma$=10 with (green) and without (red) bound-preserving limiter. Other parameters are taken as $t=0.1$ and $N=80.$}\label{ex2_figure}
\end{figure}
From the figure, we can observe that the larger the $\gamma$, the larger the gradient in the numerical approximation. Next, we take $\gamma=10$ to test the effect of the bound-preserving limiter. In the right panel of Figure \ref{ex2_figure}, the green and red curves are the numerical solutions obtained with and without bound-preserving technique, respectively. It is easy to see that, with the special technique, the green curve is not oscillatory and the numerical approximation lies between $0$ and $1$. This example clearly demonstrate that our algorithm can also be applied to problems with the present
of vacuum.

In the previous two examples, the numerical solutions with and without bound-preserving limiters look similar. However, in the following example, we can observe significant differences. Finally, we consider the necessity of the bound-preserving limiter

\begin{example}\label{ex3}
We choose the initial condition as
$$
c(x,0)=\left\{\begin{array}{cc}1,&x<1\\0,&x>1\end{array}\right.,\quad\quad p(x,0)=\left\{\begin{array}{cc}5,&x<1\\0,&x>1\end{array}\right..
$$
Other parameters are chosen as
$$
q(x,t)=0,\ z_1=0.1,\ \kappa(x)=\mu(c)=z_2=1,\ \phi(x)=1,\ D(u)=0,
$$
\end{example}
We compute up to $T=1$. with $\Delta t=0.001\Delta x^2$ to reduce the time error. We solve the problem with the bound-preserving limiter and the result is given in Figure \ref{ex3_figure}. We can see that the numerical approximation lies between 0 and 1.
\begin{figure}[!htpb]\centering
    \includegraphics[width=0.35\textwidth]{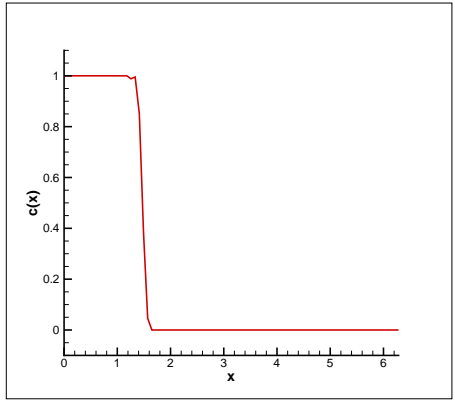}
    \caption{Example \ref{ex3}: Numerical approximations of $c$ at $t=1$.}\label{ex3_figure}
\end{figure}
Moreover, we also solve the problem without the bound-preserving limiter and the numerical approximation blows up at $t\approx0.19$. This is because, near $x=1$, the gradient of $c$ is large, which further yields strong oscillations in the numerical approximations. Therefore, the value of $d(c)=1-0.9c$ might be small or even negative, leading to ill-posed problems. We also choose a smaller time step, say $\Delta t=0.0001\Delta x^2$, and the numerical approximations blow up at $t\approx0.106$.

\subsection{Two space dimensions}
In this subsection, we consider \eqref{origin} subject to boundary condition \eqref{boundary2d}. We solve the problems on the computational domain $\Omega=[0,2\pi]\times[0,2\pi]$. If not otherwise stated, we take $N_x=N_y=N.$ We first construct an analytical solution and test the accuracy of the bound-preserving DG method.
\begin{example}\label{accuracy2d}
We choose the initial condition as
$$
c(x,y,0)=\frac12(1-\cos(x)\cos(y)),\quad\quad p(x,0)=\cos(x)\cos(y)-1.
$$
The source parameters $q$ and $\tilde{c}$ are taken as
$$
q(x,y,t)=2e^{-2t},\quad\tilde{c}=\frac12\left(e^{-2\gamma t}\left(\frac12\sin^2(x)\cos^2(y)+\frac12\cos^2(x)\sin^2(y)-\cos(x)\cos(y)\right)+1\right).
$$
Moreover, we also choose other parameters as
$$
z_1=z_2=1,\quad\phi(x,y)=\kappa(x,y)=\mu(c)=1,\quad {\bf D}({\bf u})=\left(\begin{array}{cc}\gamma&0\\0&\gamma\end{array}\right).
$$
\end{example}
It is easy to see that the exact solutions are
$$
c(x,y,t)=\frac12(1-e^{-2\gamma t}\cos(x)\cos(y)),\quad\quad p(x,y,t)=e^{-2t}(\cos(x)\cos(y)-1).
$$
We choose $\Delta t=0.02\min\{\Delta x^2,\Delta y^2\}$ with final time $T=0.01$. We take $\gamma=10^{-3}$ such that the exact solution $c$ is very close to $0$ for $t>0$. We compute the $L^\infty$-norm of the error between the numerical and exact solution, and results are given in Table \ref{accuracy_test2d}.
\begin{table}[!htbp]
\centering
\begin{tabular}{c|c|c|c|c}
\hline
&\multicolumn{2}{c|}{with limiter}&\multicolumn{2}{c}{ no limiter}\\\hline
$N$ & $L^{\infty}$ error & order&$L^{\infty}$ error & order
\\\hline
20 &1.04e-2& -- &1.00e-2& -- \\
40 &2.64e-3&1.97&2.53e-3&1.99\\
80 &6.77e-4&1.96&6.36e-4&1.99\\
160&1.75e-4&1.96&1.61e-4&1.99\\
\hline
\end{tabular}
\caption{Example \ref{accuracy2d}: accuracy test at $T=0.1$ for the second-order DG methods with and without the bound-preserving limiter.}
\label{accuracy_test2d}
\end{table}
We can observe second-order accuracy of the DG scheme with and without bound-preserving limiter. Therefore, the limiter does not kill the accuracy.

\begin{example}\label{ex5}
We choose the initial condition as
$$
c(x,y,0)=\left\{\begin{array}{ll}1,& 0<x<1,\ 0<y<1\\0,&otherwise\end{array}\right.,\quad\quad p(x,y,0)=\left\{\begin{array}{ll}5,& 0<x<1,\ 0<y<1\\0,&otherwise\end{array}\right..
$$
Other parameters are taken as
$$
q(x,y,t)=0,\ z_1=0.1,\ z_2=1,\ \phi(x,y)=1,\ {\bf D}({\bf u})={\bf 0}.
$$
\end{example}
We compute the numerical approximations of $c$ at $T=0.1$ and $T=0.5$, with $\Delta t=0.001\min\{\Delta x^2,\Delta y^2\}$ and the bound-preserving limiter. The results are given in Figure \ref{ex5_figure}. We can see that the numerical approximation lies between 0 and 1.
\begin{figure}[!htpb]\centering
    \includegraphics[width=0.35\textwidth]{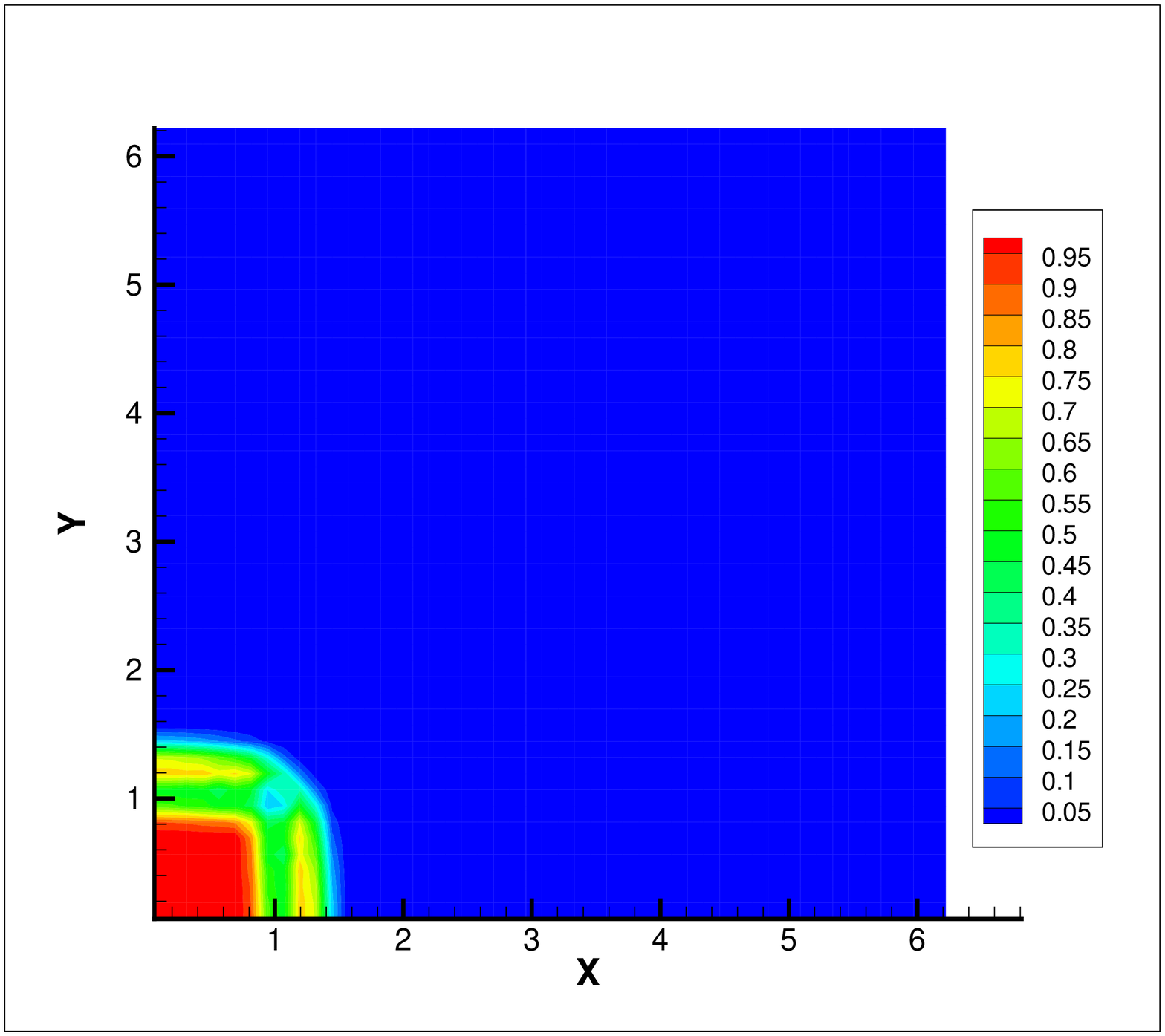}
    \includegraphics[width=0.35\textwidth]{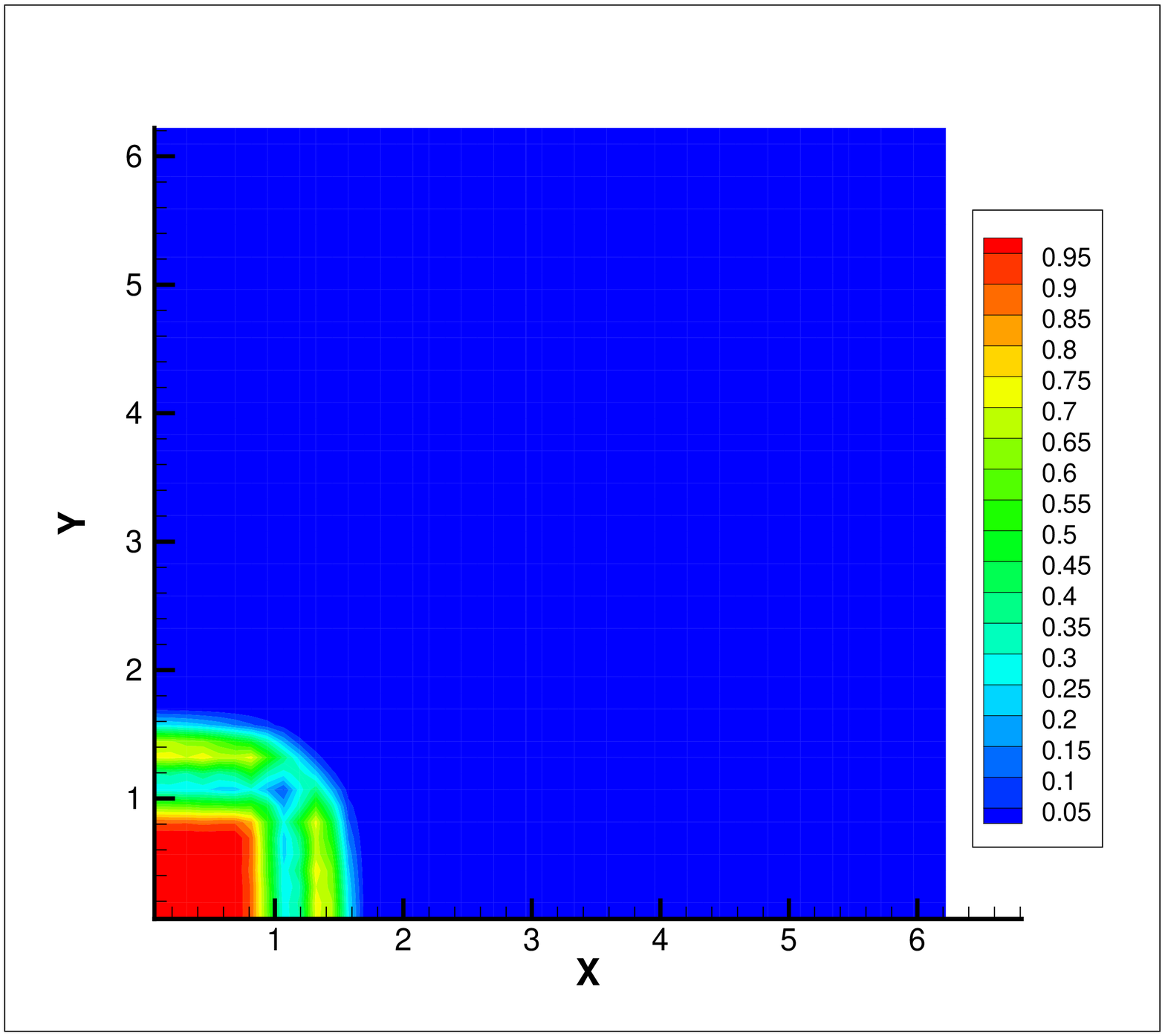}
    \caption{Example \ref{ex5}: Numerical approximations of $c$ at $t=0.1$ (left) and $t=0.5$ (right).}\label{ex5_figure}
\end{figure}
However, without the bound-preserving limiter, the numerical approximation blows up at $t\approx1.89\times10^{-3}$ due to the ill-posedness of the problems.

\begin{example}\label{ex6}
We choose the initial condition as
$$
c(x,y,0)=0.5,\quad\quad p(x,y,0)=0.
$$
Other parameters are taken as
$$
\ z_1=0.4,\ z_2=0.6,\ \phi(x)=1,\ {\bf D}({\bf u})=\left(\begin{array}{cc}|u|&0\\0&|u|\end{array}\right).
$$
The injection well is located at $(2\pi,2\pi)$ with $\tilde{c}=1$ while the production well is located at $(0,0)$.
\end{example}
We choose $\Delta t=0.01\min\{\Delta x^2,\Delta y^2\}$ and compute the numerical approximation of $c$ at t=1, 10 and 100. The results are given in Figure \ref{ex6_figure}.
\begin{figure}[!htp]\centering
    \includegraphics[width=0.32\textwidth]{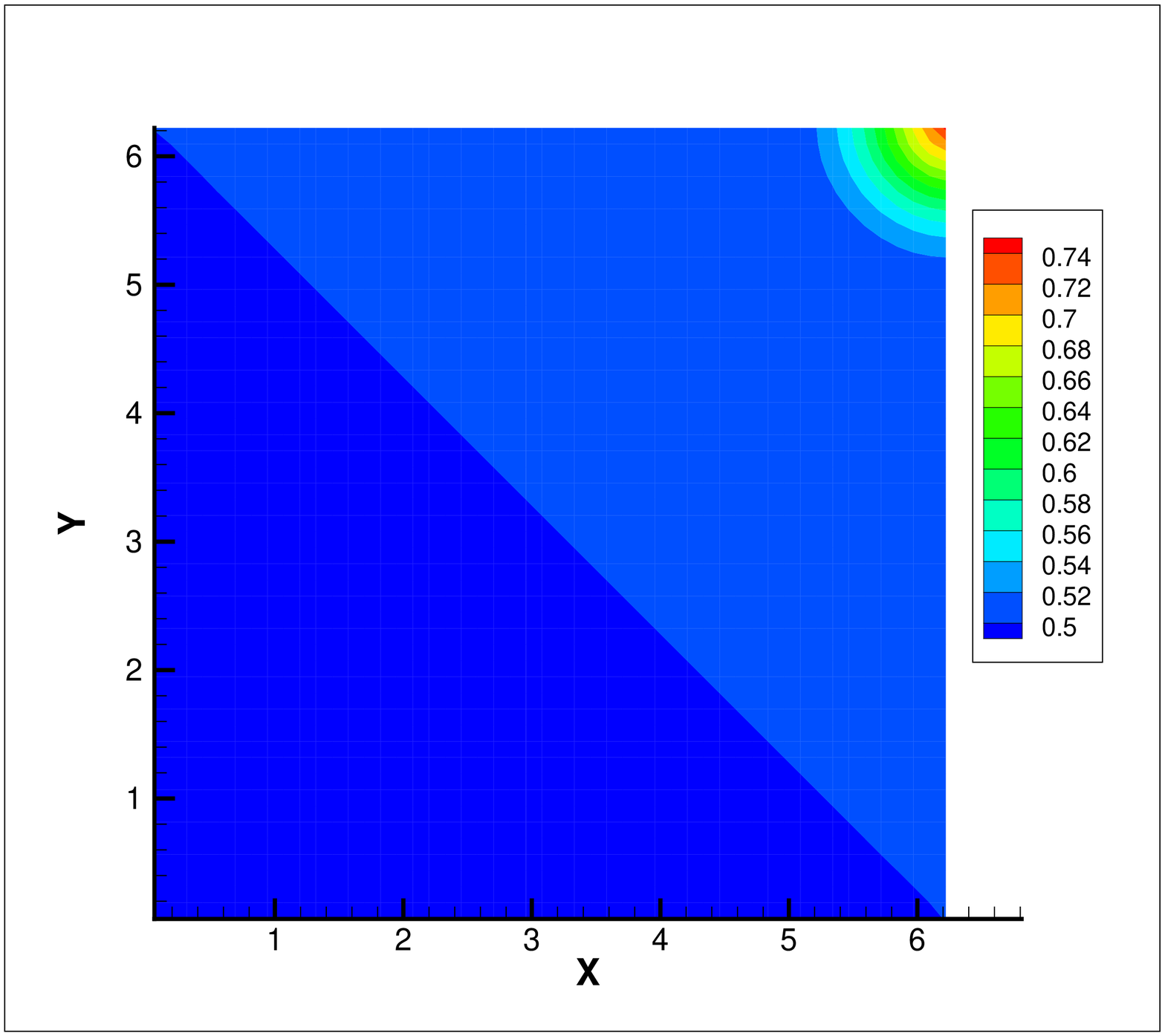}
    \includegraphics[width=0.32\textwidth]{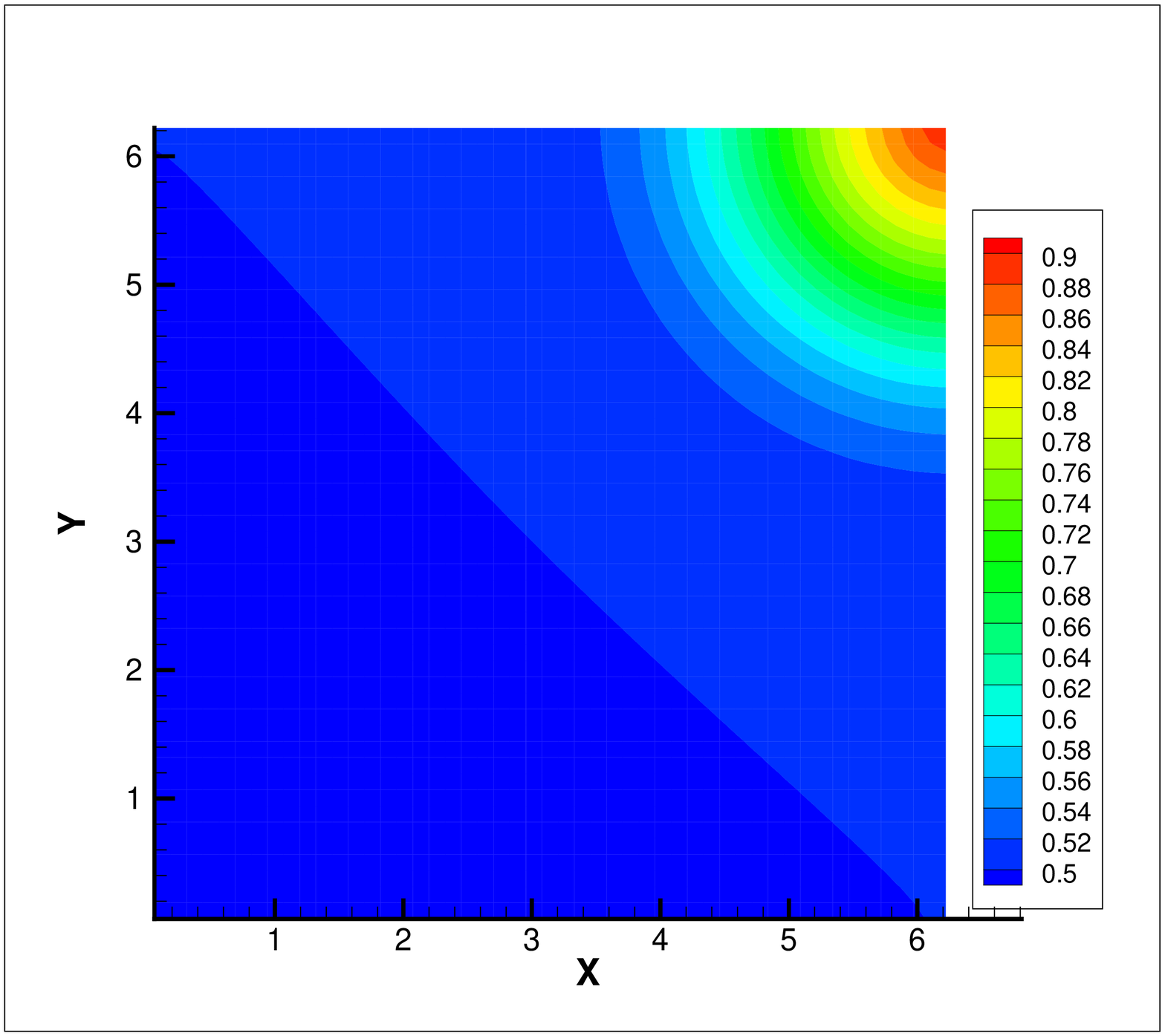}
    \includegraphics[width=0.32\textwidth]{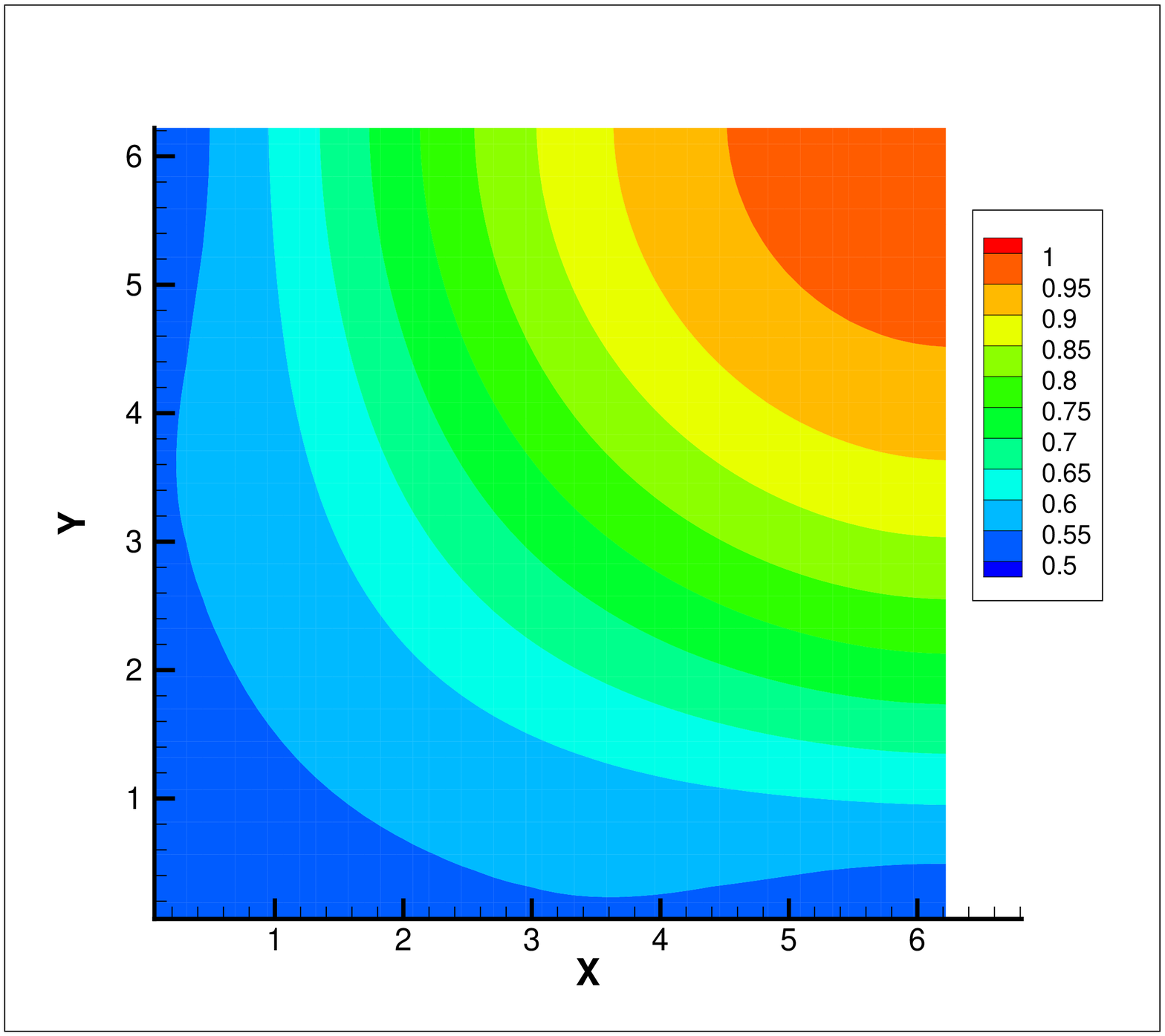}
    \caption{Example \ref{ex6}: Numerical approximations of $c$ at $t=1$ (left) $t=10$ (middle) $t=100$ (right) with $N_x=N_y=50$ and bound-preserving limiter.}\label{ex6_figure}
\end{figure}
We can observe that all the numerical approximations are between $0$ and $1$. Therefore, the bound-preserving technique also works for the 2D problems.

\section{Concluding remarks}
\label{sec5}
In this paper, we apply DG methods to compressible miscible displacement problem in porous media. The bound-preserving technique has been applied to the problems in one and two space dimensions. In the future, we will apply the technique to triangular meshes and consider multi-species fluid mixtures.


\begin{thebibliography}{99}


\bibitem{Bartels} S. Bartels, M. Jensen and R. M\"uller, {\em Discontinuous Galerkin finite element convergence for incompressible miscible displacement problem of low regularity,} SIAM Journal on Numerical Analysis, 47 (2009), 3720-3743.


\bibitem{Bastian} Peter Bastian, {\em A fully-coupled discontinuous Galerkin method for two-phase flow in porous media with discontinuous capillary pressure,} Computational Geosciences, 18 (2014), 779-796.




\bibitem{Chen2010} H.-Z. Chen, H. Wang, {\em An optimal-order error estimate on an $H^1$-Galerkin mixed method for a nonlinear parabolic equation in porous medium flow,} Numerical Methods for Partial Differential Equations, 26 (2010), 188-205.

\bibitem{Chen2016} Z. Chen, H. Huang and J. Yan, {\em Third order Maximum-principle-satisfying direct discontinuous Galerkin methods for time dependent convection diffusion equations on unstructured triangular meshes,} Journal of Computational Physics, 308 (2016), 198-217.


\bibitem{Chou1991} S.-H. Chou and Q. Li, {\em Mixed finite element methods for compressible miscible displacement in porous media,} Mathematics of Computation, 57 (1991), 507-527.


\bibitem{Cui2007} M. Cui, {\em A combined mixed and discontinuous Galerkin method for compressible miscible displacement problem in porous media,} Journal of Computational and Applied Mathematics, 198 (2007), 19-34.

\bibitem{Cui2008} M. Cui, {\em Analysis of a semidiscrete discontinuous Galerkin scheme for compressible miscible displacement problem,} Journal of Computational and Applied Mathematics, 214 (2008), 617-636.

\bibitem{Dedner} A. Dedner and R. Kl\"ofkorn, {\em A Generic Stabilization Approach for Higher Order Discontinuous Galerkin Methods for Convection Dominated Problems,} Journal of Scientific Computing, 47 (2011), 365-388.

\bibitem{Douglas1} J. Douglas, Jr., R.E. Ewing and M.F. Wheeler, {\em A time-discretization procedure for a mixed finite element approximation of miscible displacement in porous media,} R.A.I.R.O. Analyse num\'erique, 17 (1983), 249-256.

\bibitem{Douglas2} J. Douglas, Jr., R.E. Ewing and M.F. Wheeler, {\em The approximation of the pressure by a mixed method in the simulation of miscible displacement,} R.A.I.R.O. Analyse num\'erique, 17 (1983), 17-33.

\bibitem{Douglas Jr} J. Douglas, Jr. and J. Roberts, {\em Numerical methods for a model for compressible miscible displacement in porous media,} Mathematics of Computation, 41 (1983), 441-459.


\bibitem{Ern2010} A. Ern, I. Mozolevski, L. Schuh, {\em Discontinuous Galerkin approximation of two-phase flows in heterogeneous porous media with discontinuous capillary pressures,} Computer Methods in Applied Mechanics and Engineering, 199 (2010), 1491-1501.

\bibitem{Ern2009} A. Ern, I. Mozolevski, L. Schuh, {\em Accurate velocity reconstruction for Discontinuous Galerkin approximations of two-phase porous media flows,} Comptes Rendus Mathematique, 347 (2009), 551–554.






\bibitem{time1} S. Gottlieb, C.-W. Shu and E. Tadmor, {\em Strong stability-preserving high-order time discretization methods}, SIAM Review, 43 (2001), 89-112.

\bibitem{Guo2015} H. Guo and Q. Zhang, {\em Error analysis of the semi-discrete local discontinuous Galerkin method for compressible miscible displacement problem in porous media}, Applied Mathematics and Computation, 259 (2015), 88-105.

\bibitem{Guo} H. Guo, Q. Zhang and Y. Yang, {\em A combined mixed finite element method and local discontinuous Galerkin method for miscible displacement problem in porous media,} Science China Mathematics, 57 (2014), 2301-2320.

\bibitem{Guo2015a} L. Guo and Y. Yang, {\em Positivity-preserving high-order local discontinuous Galerkin method for parabolic equations with blow-up solutions,} Journal of Computational Physics, 289 (2015), 181-195.





\bibitem{Krivodonova} L. Krivodonova, J. Xin, J.-F. Remacle, N. Chevaugeon and J. E. Flaherty, {\em Shock detection and limiting with discontinuous Galerkin methods for hyperbolic conservation laws,} Applied Numerical Mathematics, 48 (2004), 323-338.

\bibitem{Kumar} S. Kumar, {\em A mixed and discontinuous Galerkin finite volume element method for incompressible miscible displacement problems in porous media,} Numerical Methods for Partial Differential Equations, 28 (2012), 1354-1381.


\bibitem{yyima} X. Li, C.-W. Shu and Y. Yang, {\em Local discontinuous Galerkin methods for Keller-Segel chemotaxis model,} submitted to Journal of Scientific Computing.

\bibitem{Ma2005} N. Ma, D. Yang and T. Lu, {\em $L^2$-norm error bounds of characteristics collocation method for compressible miscible displacement in porous media,} International Journal of Numerical Analysis and Modeling, 2 Supp (2005), 28-42.

\bibitem{Qin2016} T. Qin, C.-W. Shu and Y. Yang, {\em Bound-preserving discontinuous Galerkin methods for relativistic hydrodynamics,} Journal of Computational Physics, 315 (2016), 323-347.

\bibitem{Riviere} B. Rivi\`ere, {\em Discontinuous Galerkin finite element methods for solving the miscible displacement problem in porous media,} Ph.D. Thesis, The University of Texas at Austin, 2000.


\bibitem{time2} C.-W. Shu, {\em Total-variation-diminishing time discretizations}, SIAM Journal on Scientific and Statistical Computing, 9 (1988), 1073-1084.

\bibitem{SO} C.-W. Shu and S. Osher, {\em Efficient implementation of essentially non-oscillatory shock-capturing schemes}, Journal of Computational Physics, 77 (1988), 439-471.

\bibitem{Sun2002} S. Sun, B. Rivi\`ere and M.F. Wheeler, {\em A combined mixed finite element and discontinuous Galerkin method for miscible displacement problem in porous media,} Recent Progress in Computational and Applied PDEs, Tony Chan et al. (Eds.), Kluwer Academic Publishers, Plenum Press, Dordrecht, NewYork,  2002, 323-351.

\bibitem{Sun2005} S. Sun and M.F. Wheeler, {\em Discontinuous Galerkin methods for coupled flow and reactive transport problems,} Applied Numerical Mathematics, 52 (2005), 273-298.

\bibitem{Sun2005b} S. Sun, M.F. Wheeler, {\em Symmetric and nonsymmetric discontinuous Galerkin methods for reactive transport in porous media,} SIAM Journal on Numerical Analysis, 43 (2005), 195-219.

\bibitem{Wang2000} H. Wang, D. Liang, R.E. Ewing, S.L. Lyons and G. Qin, {\em An approximation to miscible fluid flows in porous media with point sources and sinks by an Eulerian-Lagrangian localized adjoint method and mixed finite element methods,} SIAM Journal on Scentific Computing, 22 (2000), 561-581.

\bibitem{Wang2000a} H. Wang, D. Liang, R.E. Ewing, S.L. Lyons and G. Qin, {\em An accurate approximation to compressible flow in porous media with wells,} Numerical Treatment of Multiphase Flows in Porous Media, Lecture Notes in Physics, 552 (2000), 324-332.


\bibitem{Wheeler} M. F. Wheeler and B. L. Darlow, {\em Interiori penalty Galerkin methods for miscible displacement problems in porous media,} Computational Methods in Nonlinear Mechanics, North-Holland, Amsterdam, 1980, 458-506.

\bibitem{Xing2010} Y. Xing, X. Zhang and C.-W. Shu, {\em Positivity preserving high order well balanced discontinuous Galerkin methods for the shallow water equations,} Advances inWater Resources, 33 (2010), 1476-1493.

\bibitem{Xiong2015} T. Xiong, J.-M. Qiu and Z. Xu, {\em High order maximum-principle-preserving discontinuous Galerkin method for convection-diffusion equations,} SIAM Journal on Scientific Computing, 37 (2015), A583-A608.

\bibitem{Xu2014} Z. Xu, {\em Parametrized maximum principle preserving flux limiters for high order schemes solving hyperbolic conservation laws: one-dimensional scalar problem,} Mathematics of Computation, 83 (2014), 310-331.



\bibitem{Yang2001} D. Yang {\em A splitting positive definite mixed element method for miscible displacement of compressible flow in porous media,} Numerical Methods for Partial Differential Equations, 17 (2001), 229-249.

\bibitem{Yang2010} J. Yang and Y. Chen {\em A priori error estimates of a combined mixed finite element and discontinuous Galerkin method for compressible miscible displalcement with molecular diffusion and dispersion,} Journal of Computational Mathematics, 28 (2010), 1005-1022.

\bibitem{Yang2010b} J. Yang and Y. Chen , {\em A priori error analysis of a discontinuous Galerkin approximation for a kind of compressible miscible displacement problems,} Science China Mathematics, 53 (2010), 2679-2696.

\bibitem{Yang2011} J. Yang {\em A posteriori error of a discontinuous Galerkin scheme for compressible miscible displacement problems with molecular diffusion and dispersion,} International Journal for Nnumerical Methods in Fluids 65 (2011) 781-797.

\bibitem{Yang2011b} J. Yang and Y. Chen {\em Superconvergence of a combined mixed finite element and discontinuous Galerkin method for a compressible miscible displacement problem}, Acta Mathematicae Applicatae Sinica, 27 (2011), 481-494.

\bibitem{Yang2013} J. Yang, Y. Chen and Z. Xiong, {\em Superconvergence of a full-discrete combined mixed finite element and discontinuous Galerkin method for a compressible miscible displacement problem,} Numerical Methods for Partial Differential Equations,  29 (2013), 1801-1820.


\bibitem{yynm} Y. Yang and C.-W. Shu, {\em Discontinuous Galerkin method for hyperbolic equations involving $\delta$-singularities: negative-order norm error estimates and applications,} Numerische Mathematik, 124, (2013), 753-781.

\bibitem{yyjcp} Y. Yang, D. Wei and C.-W. Shu, {\em Discontinuous Galerkin method for Krause's consensus models and pressureless Euler equations,} Journal of Computational Physics, 252 (2013), 109-127.


\bibitem{Yuan1999} Y. Yuan {\em The characteristic finite difference fractional steps methods for compressible two-phase displacement problem,} Science in China Series A: Mathematics, 42 (1999), 48-57.

\bibitem{Yuan2003} Y. Yuan, {\em The upwind finite difference fractional steps mthods for two-phase compressible flow in porous media,} Numerical Methods for Partial Differential Equations, 19 (2003), 67-88.

\bibitem{Yuan2004} Y. Yuan, {\em The modified upwind finite difference fractional steps method for compressible two-phase displacement problem,} Acta Mathematicae Applicatae Sinica, 20 (2004), 381-396.

\bibitem{zhang1} X. Zhang and C.-W. Shu, {\em On maximum-principle-satisfying high order schemes for scalar conservation laws}, Journal of Computational Physics, 229 (2010), 3091-3120.

\bibitem{zhang2} X. Zhang and C.-W. Shu, {\em On positivity preserving high order discontinuous Galerkin schemes for compressible Euler equations on rectangular meshes}, Journal of Computational Physics, 229 (2010), 8918-8934.

\bibitem{zhang3} X. Zhang and C.-W. Shu, {\em Positivity-preserving high order discontinuous Galerkin schemes for compressible Euler equations with source terms}, Journal of Computational Physics, 230 (2011), 1238-1248.

\bibitem{yifan} Y. Zhang, X. Zhang and C.-W. Shu, {\em Maximum-principle-satisfying second order discontinuous Galerkin schemes for convection-diffusion equations on triangular meshes,} Journal of Computational Physics, 234 (2013), 295-316.

\end{thebibliography}
\end{document}